\numberwithin{equation}{section}
\theoremstyle{plain}
\newtheorem{thm}{Theorem}[section]
\newtheorem{lem}[thm]{Lemma}
\theoremstyle{definition}
\newtheorem{ex}[thm]{Example}
\newcommand{\End}{\operatorname{End}}
\newcommand{\Hom}{\operatorname{Hom}}
\newcommand{\Ann}{\operatorname{Ann}}
\newcommand{\MaxSpec}{\operatorname{MaxSpec}}
\newcommand{\Ker}{\textrm{Ker}}
\newcommand{\dive}{\operatorname{div}}
\newcommand{\0}{\bar 0}
\newcommand{\1}{\bar 1}
\numberwithin{equation}{subsection}
\def\Z{{\mathbb Z}}
\def\bbc{\mathbb C}
\def\:{\colon}
\def\h{\mathfrak{h}}
\newcommand{\fg}{\mathfrak{g}}
\def \fm{\mathfrak{m}}
\def \fg{\mathfrak{g}}
\def \fh{\mathfrak{h}}
\def \fh{\mathfrak{h}}
\def \sl{\mathfrak{sl}}
\def \fn{\mathfrak{n}}
\def\C{{\mathbb C}}
\def\Z{{\mathbb Z}}
\def\bu{\textbf{U}}
\newcommand{\lie}[1]{\mathfrak{#1}}
\def\bu{\textbf{U}}
\begin{document}
\normalsize

\title[On representations of Cartan   map Lie  superalgebras ]
{On representations of  Cartan    map Lie superalgebras}

\author{Irfan Bagci}
\address{Department of Mathematics \\
University of North Georgia \\
Oakwood, GA 30566}
\email{irfan.bagci@ung.edu}


\begin{abstract}
Let $\fg$ be a simple Cartan type Lie superalgebra  and $A$ be a finitely generated,  commutative, associative algebra with unity over $\bbc$. We refer to the Lie superalgebras of the form $\fg\otimes A$ as Cartan map Lie superalgebras. This paper  describes finite dimensional irreducible representations of
  the Cartan map Lie superalgebras $\fg \otimes A$ and  twisted  Cartan map Lie superalgebras $(\fg \otimes A)^{T},$ where $T$ denotes  a finite abelian group acting on $\fg$ and the maximal ideal spectrum of  $A$ by automorphisms.
\end{abstract}
\maketitle
\section{Introduction}

Recently there has been some interest in representation theory of the  map Lie (super)algebras $\fg \otimes A$ and twisted map Lie (super)algebras $(\fg \otimes A)^{T}$ where $\fg$ is a finite dimensional simple Lie (super)algebra and $A$ is a commutative associative algebra with unity over the field of complex numbers, and $T$ is a finite group acting both on $\fg$ and $A$ by automorphisms \cite{Lau, Li, ER2, RK, NSS, Sav}. Of particular interest is the classification of finite dimensional irreducible representations of these algebras.

In 1977, V. Kac provided a complete classification of simple Lie superalgebras, \cite{Kac}. The simple finite-dimensional Lie superalgebras are divided into two types based on their even part: they are either classical (simple classical Lie superalgebra are also of  two type: basic and strange) or of Cartan type.  Lie superalgebras of Cartan type consists of four infinite families $W(n),  S(n),  \tilde{S}(n)$,  and  $H(n)$ \cite{ Kac, Sch}.

When  $A= \C[X_1^{\pm 1}, \dots, X_n^{\pm 1}]$, the Laurent polynomial algebra in $n$ commuting variables, the superalgebra $\fg \otimes A$ is called multiloop superalgebra.  A description of the finite-dimensional simple modules of the multiloop superalgebras of the basic  classical Lie superalgebras was given in \cite{ ER2, RK} and  the general case  in \cite{Sav}. The twisted case is also given  in \cite{Sav}.

In this paper $\fg$ is a simple Cartan type Lie superalgebra and $A$ is a finitely generated, commutative, associative unitary algebra. The Lie superalgebra $\fg \otimes A$ is refered to as  Cartan map Lie superalgebra.
  In this article we  classify irreducible finite dimensional representations of  the Lie superalgebras $\fg \otimes A$ and $(\fg \otimes A)^{T}$. We first introduce Generalized Kac Modules. These are a natural generalization of the Kac modules defined for $\fg$, \cite{Kac}. These are modules induced from the map Lie algebra $\fg_0 \otimes A$, where $\fg_0$ is the degree zero part of $\fg$ in the $\Z$-grading. We then show that all irreducible finite dimensional representations of $\fg \otimes A$  can be described as irreducible quotients of Generalized Kac Modules. For $\fg=  S(n),    \tilde{S}(n),$ and $ H(n)$ all irreducible finite dimensional representations are tensor products of evaluation representations. For $\fg= W(n)$ irreducible finite dimensional representations are parametrized by irreducible finite dimensional representations of the Lie algebra $\fg_0\otimes A$ with finite support. Lastly we give a complete description of irreducible finite dimensional representations of the twisted Cartan map Lie superalgebra $(\fg \otimes A)^{T}$ under the assumptions that $A$ is finitely generated and the group $T$ is abelian and acts freely on maximal ideal spectrum of $A$. We show that every irreducible finite dimensional representation of $(\fg \otimes A)^{T}$ is the restriction of an irreducible finite dimensional  representation of $\fg\otimes A$.

This paper is organized as follows: In Section 2 we fix some notation and briefly review basic facts about simple Lie superalgebras of Cartan type, introduce Cartan map Lie superalgebras and record the properties we are going to need in the rest of the paper. Next in Section 3 we define Generalized Kac Modules and prove some basic properties of these modules.  In Section 4 we state and prove  the main results of this paper.

\section{Preliminaries}

\subsection{}Throughout this paper $\C$ is the set  of complex numbers. All vector spaces, algebras and tensor products are defined  over  $\C$. Whenever we refer to dimension of an algebra or ideal, we mean its dimension as vector space over $\C$. We will denote the integers by $\Z$, the positive integers by $\Z^+$, and we let $\Z_2=\{\0, \1\}$ denote the quotient ring $\Z/2\Z$..

 A Lie superalgebra is a finite dimensional $\Z_2$-graded vector space $\fg=\fg_{\0}\oplus\fg_{{\1}}$ with a bracket $[,]:\fg\otimes\fg\rightarrow\fg$ which preserves the $\Z_2$-grading and satisfies graded versions of the operations used to define Lie algebras. The even part $\fg_{\0}$ is a Lie algebra under the bracket.

Given any Lie superalgebra $\fg$ let $\bu(\fg)$ be the universal enveloping superalgebra of $\fg$.  $\bu(\fg)$ admits a PBW type basis and if   $x_1, \cdots, x_m$ is  a basis of $\fg_{\0}$ and $y_1, \dots, y_n$ is a basis of $\fg_{\1}$, then the elements
$$x_1^{i_1} \dots x_m^{i_m}y_1^{j_1}\dots y_n^{j_n}\ \  \text{with} \ \ i_1, \dots, i_m \geq 0  \ \ \text{and} \ \ j_1, \dots, j_n \in \{0, 1\}$$
form a basis of the universal enveloping superalgebra $\bu(\fg)$.

Finite-dimensional complex simple Lie superalgebras were classied by Kac \cite{Kac}.  The finite-dimensional simple Lie superalgebras are divided into two classes: the classical (when the even part is a reductive Lie algebra)
and the Cartan ones (otherwise). Lie superalgebras of Cartan type consists of four infinite families $$W(n), \ n\geq 2; \ S(n), \ n\geq 3; \  \tilde{S}(n), \ n\geq 4  \  \text{ and even}; \  H(n), \ n \geq 4.$$

 We briefly define Cartan type Lie superalgebras. Assume that $n\geq 2$.  Let $\Lambda(n)$ denote the exterior algebra on $n$ odd generators $\xi_1, \dots, \xi_n$; $\Lambda(n)$ is a $2^n$-dimensional associative algebra we assign to it a $\Z$-grading by setting deg  $\xi_i = 1$ for $1\leq i\leq n$. The $\Z_2$-grading is inherited from the $\Z$-grading by setting $\Lambda(n)_{\0} = \bigoplus_{k} \Lambda^{2k}(n)$ and $\Lambda(n)_{\1} = \bigoplus_{k} \Lambda^{2k+1}(n)$.

Let $\bar x$ denote the parity of a homogeneous  element $x$ in a $\Z_2$-graded vector space. A (homogeneous) \emph{superderivation} of $\Lambda(n)$ is a linear map $D: \Lambda(n) \to \Lambda(n)$ which satisfies $D(xy)=D(x)y + (-1)^{\bar D \; \bar x}xD(y)$ for all homogenous $x,y \in \Lambda(n)$. 
 Then $W(n)$ is the Lie superalgebra of super derivations of $\Lambda(n)$. The bracket on  $W(n)$ is the supercommutator bracket. That is,
$$[x,y]=xy-(-1)^{ \bar x \bar y}yx$$
for homogeneous $x,y$ and extended to all $W(n)$ by linearity.  The $\Z$-grading on $\Lambda(n)$ induces a $\Z$-grading on $W(n)$
$$W(n)= W(n)_{-1} \oplus W(n)_0 \oplus \dots \oplus W(n)_{n-1},$$
where $W(n)_k$ consists of derivations that increase the degree of a homogeneous element by $k$. The $\Z_2$-grading on $W(n)$ is inherited from the $\Z$-grading by setting $W(n)_{\0}= \oplus_{k} W(n)_{2k}$ and $W(n)_{\1}= \oplus_{k} W(n)_{2k+1}.$
 Since the bracket preserves  the $\Z$-grading $W(n)_0$ is a Lie subalgebra of $W(n)$ and  $W(n)_0 \cong \mathfrak{gl}(n).$

Denote by  $\partial_i$ the derivation of $\Lambda(n)$ defined by
$$\partial_i(\xi_j): = \delta_{i,j}.$$ Then any element $D$ of $W(n)$ can be written in the form
$$\sum_{i=1}^nf_i\partial_i,$$
where $f_i \in \Lambda(n)$.

The superalgebra $S(n)$ is the subalgebra of $W(n)$ consisting of all elements $D \in W(n)$ such that
$\dive(D) = 0$, where
$$\dive\left(\sum_{i=1}^n f_i \partial_i\right): = \sum_{i=1}^n \partial_i(f_i).$$
The superalgebra $S(n)$ has a $\Z$-grading induced by the grading of $W(n)$
$$S(n)= S(n)_{-1} \oplus S(n)_0 \oplus \dots \oplus S(n)_{n-2}.$$
and $S(n)_0$ is isomorphic to $\sl(n)$.

The simple Lie superalgebra $\tilde{S}(n)$ is defined only when $n$ is even. The superalgebra $\tilde{S}(n)$ has a $\Z$-grading
$$\tilde{S}(n)= \tilde{S}(n)_{-1} \oplus \tilde{S}(n)_0 \oplus \dots \oplus \tilde{S}(n)_{n-2}.$$
For each $r$ with $0\leq r \leq n-2, \tilde{S}_r(n)= S_r(n)$ . The difference is that $\tilde{S}(n)_{-1}$ has a basis consisting of $\xi_1 \dots \xi_n \partial_i$.
This grading is not an algebra grading.

The subspace of $W(n)$ spanned by all super derivations of the form
$$\Sigma _{i\in \Z^{+}}\partial _i(f)\partial _i$$
where $f\in \Lambda(n)$, is a Lie superalgebra called $\tilde{H}(n)$. It inherits a natural $\Z$-grading from $W(n)$ and we have
$$\tilde{H}(n)=\tilde{H}(n)_{-1} \oplus \tilde{H}(n)_0 \oplus \dots \oplus \tilde{H}(n)_{n-2} .$$
The subalgebra $$H(n)=[\tilde{H}(n), \tilde{H}(n)]=H(n)_{-1} \oplus H(n)_0 \oplus \dots \oplus H(n)_{n-3}$$ is a simple Lie superalgebra of Cartan type.
$H(n)_0 \cong \mathfrak{so}(n)$  as a Lie algebra 

Throughout this work unless otherwise is noted  $\fg$ will be a Cartan type Lie superalgebra. The crucial difference between the Cartan type superalgebras and the classical superalgebras
is that the $\fg_{\0}$ component is no longer reductive.  However, as was described above, the Cartan type Lie superalgebras  admit a ${\mathbb Z}$-grading: $\fg=\bigoplus_{k\in {\mathbb Z}} \fg_{k}.$  The grading is compatible with the $\Z_{2}$-grading in the sense that $\bigoplus_{k}\fg_{2k}=\fg_{\0}$ and $\bigoplus_{k}\fg_{2k+1}=\fg_{\1}$.

Let $$\fg^+ := \fg_1 \oplus \dots \oplus \fg_s$$
 so that $\fg$ has a decomposition
$$\fg = \fg_{-1} \oplus \fg_0 \oplus \fg^+.$$
Since the bracket on $\fg$ preserves the $\Z$-grading, $\fg_{0} \oplus \fg^+ $ and $ \fg_{-1}\oplus \fg_0$ are subalgebras of $\fg$.

 The  Cartan subalgebra $\fh$ of $\fg$ coincides with Cartan subalgebra of $\fg_0$.  Let  $\fg_{0} = \fn_{0}^- \oplus \h \oplus \fn_{0}$ of $\fg_{0}$ be a triangular decomposition of $\fg_0$. This gives a triangular decomposition
$$\fg = \fn^- \oplus \h \oplus \fn,$$
where $\fn := \fn_{0} \oplus \fg^+$ and $\fn^- := \fn_{0}^{-} \oplus \fg_{-1}$. This  decomposition is also called the distinguished triangular decomposition.
Let $\Phi =\Phi_{\0}\cup \Phi_{\1}$  be the roots of $\fg$ with respect to this triangular decomposition and $\Delta =\Delta_{\0}\cup \Delta_{\1}$  be the set of simple roots.  Let $\Phi^+ =\Phi_{\0}^+\cup \Phi_{\1}^+$ be the set of positive roots and  $\Phi^- = - \Phi^+$ be the set of  negative roots.  

\subsection{}
Let $\lie{g}$ be a Cartan type Lie superalgebra  with super commutator bracket  $[\ ,\ ]_{\lie{g}}$. Fix a commutative associative unitary algebra $A$.  The \emph{map superalgebra} of $\lie{g}$ is the $\Z_2$-graded vector space  $\lie{g}\otimes A$, where   $(\lie{g}\otimes A)_{\0} = \lie{g}_{\0}\otimes A $ and  $(\lie{g}\otimes A)_{\1}= \lie{g}_{\1}\otimes A $, with bracket given by extending the bracket
$$[z\otimes a, z'\otimes b]=[z,z']_{\lie{g}}\otimes ab,\ z,z'\in\fg,\ a,b\in A.$$
by linearity. We will refer to a Lie superalgebra of this form as  \emph{Cartan map Lie superalgebra}.

\begin{ex} If we  take $A=\C[X]$, then  the Lie superalgebra  $\fg \otimes \C[X]$ is called a \emph{current superalgebra}.

If we  take $A=\C[X, X^{-1}]$, then  the Lie superalgebra $\fg \otimes \C[X, X^{-1}]$ is called a \emph{loop superalgebra}.

If we  take $A=\C[X_1^{\pm 1}, \dots,  X_n^{\pm 1}]$, then  the Lie superalgebra $\fg \otimes \C[X_1^{\pm 1}, \dots,  X_n^{\pm 1}]$ is called a \emph{multiloop superalgebra}.

\end{ex}

\subsection{} Recall that $\fg =\fn^- \oplus \fh \oplus \fn$.
Let $$\fg \otimes A=\fn^- \otimes A \oplus \fh \otimes A \oplus \fn \otimes A$$ be the corresponding triangular decomposition for $\fg \otimes A$.

A representation $(\pi, M)$ of $(\fg \otimes A)$  is called a \emph{highest weight representation} if there exists a nonzero vector $m \in M$ such that:
$$U(\fg \otimes A)m=M,$$
$$(\fn \otimes A)m=0,$$
$$U(\fh \otimes A)m= \C m.$$

Such a vector $m$  is called a \emph{highest weight vector}.

\begin{lem}\label{lemhw}Every finite dimensional irreducible representation $(\pi, M)$   of  $\fg \otimes A$ is a highest weight representation.
\end{lem}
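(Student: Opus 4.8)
The plan is to produce a single nonzero vector that is simultaneously an eigenvector (character vector) for $\fh\otimes A$ and is annihilated by $\fn\otimes A$, and then to invoke irreducibility for cyclicity. The essential idea is to extract such a vector from a ``maximal'' weight space, working throughout with \emph{generalized} weight spaces so that no semisimplicity of the $\fh$-action need be assumed.

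First I would restrict the action to the even abelian Cartan subalgebra $\fh=\fh\otimes 1\subseteq\fg\otimes A$. Since $M$ is finite dimensional and the operators $\{h\otimes 1 : h\in\fh\}$ pairwise commute, $M$ decomposes as a direct sum of simultaneous generalized eigenspaces $M=\bigoplus_{\lambda\in\fh^{*}}M_{\lambda}$, where $M_{\lambda}=\{m : (h\otimes1-\lambda(h))^{N} m=0 \text{ for all } h\in\fh,\ N\gg0\}$. I want to emphasize that this step needs only that $\fh$ is abelian; I do \emph{not} claim $\fh$ acts semisimply, which is precisely why generalized rather than honest weight spaces are used. Only finitely many $\lambda$ occur.

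Next, using $\fn=\bigoplus_{\beta\in\Phi^{+}}\fg_{\beta}$ I would observe that $\fn\otimes A=\bigoplus_{\beta\in\Phi^{+}}(\fg_{\beta}\otimes A)$, and that each summand shifts weights: for $x\in\fg_{\beta}$ and $a\in A$ the relation $[h\otimes1,x\otimes a]=\beta(h)\,x\otimes a$ (an ordinary commutator, as $\fh$ is even) gives $(x\otimes a)M_{\lambda}\subseteq M_{\lambda+\beta}$, the standard fact that a root vector carries the generalized weight space of weight $\lambda$ into that of weight $\lambda+\beta$. Order $\fh^{*}$ by $\mu\preceq\nu$ iff $\nu-\mu$ is a nonnegative integer combination of positive roots, and choose $\lambda_{0}$ maximal among the finitely many weights of $M$. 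Then for every $\beta\in\Phi^{+}$ we have $\lambda_{0}+\beta\succ\lambda_{0}$, so $M_{\lambda_{0}+\beta}=0$ and hence $(\fg_{\beta}\otimes A)M_{\lambda_{0}}=0$; summing over $\beta$ gives $(\fn\otimes A)M_{\lambda_{0}}=0$.

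Finally, since $\fh$ is abelian, $\fh\otimes A$ is an abelian even subalgebra which commutes with $\fh\otimes1$ and therefore preserves $M_{\lambda_{0}}$. Acting as a commuting family of operators on the nonzero finite-dimensional space $M_{\lambda_{0}}$ over $\C$, it admits a common eigenvector $m\neq0$, so $U(\fh\otimes A)m=\C m$. As $m\in M_{\lambda_{0}}$ we also have $(\fn\otimes A)m=0$, and irreducibility of $M$ forces $U(\fg\otimes A)m=M$; thus $m$ is a highest weight vector and $M$ is a highest weight representation. The only point demanding genuine care is the one flagged above: because finite-dimensional representations of $\fg\otimes A$ need not be $\fh$-semisimple, the argument must be run with generalized weight spaces, and one must verify that root vectors still map one generalized weight space into another. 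This is an easy induction on the generalized-eigenspace filtration, but it is the step where the absence of semisimplicity (and the super structure) could cause trouble if overlooked.
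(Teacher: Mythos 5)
Your proof is correct, and its skeleton matches the paper's (weight decomposition for the Cartan subalgebra, root-vector shifts, a maximal weight, then irreducibility for cyclicity), but you handle the key technical point --- the possible failure of semisimplicity of the $\fh$-action --- by a genuinely different mechanism. The paper first uses the fact that every finite-dimensional irreducible representation of the abelian Lie algebra $\fh\otimes A$ is one-dimensional to produce a common eigenvector for all of $\fh\otimes A$, and then invokes irreducibility of $M$ (which is then spanned by root-vector monomials applied to that eigenvector) to conclude that $M$ is an honest weight module; only afterwards does it take a maximal weight $\lambda$ and a vector $m\in M_\lambda$. You instead never need that propagation step: you work with simultaneous \emph{generalized} eigenspaces of $\fh\otimes 1$ from the outset, verify the shift property $(\fg_\beta\otimes A)M_\lambda\subseteq M_{\lambda+\beta}$ at that level, and only at the end extract a common eigenvector for the commuting family $\fh\otimes A$ inside the top space $M_{\lambda_0}$, which is preserved by $\fh\otimes A$ because $[\fh\otimes A,\fh\otimes 1]=0$. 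Your route in fact buys a little extra rigor: in the paper's proof an arbitrary nonzero $m\in M_\lambda$ is declared to be a highest weight vector, but $U(\fh\otimes A)m=\C m$ is not automatic (an eigenvector for $\fh\otimes 1$ need not be an eigenvector for all of $\fh\otimes A$), and your final commuting-operators argument on $M_{\lambda_0}$ is precisely what closes that gap; the paper's route, in exchange, never has to mention generalized eigenspaces at all. One point that both you and the paper leave implicit is that a maximal weight exists because the order determined by $\Phi^+$ is a genuine partial order, i.e.\ no nontrivial nonnegative integer combination of positive roots of the distinguished triangular decomposition vanishes; this is true for Cartan type $\fg$ (the positive roots have nonnegative degree in the $\Z$-grading, and the degree-zero ones are positive roots of the reductive algebra $\fg_0$), but it deserves a sentence.
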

\begin{proof} Since the Cartan subalgebra $\fh$ of $\fg$ is a Cartan subalgebra of the Lie algebra $\fg_0$ it is an abelian Lie algebra. Since both $\fh$ and $A$ are commutative, it follows that $\fh \otimes A$ is also an abelian Lie algebra. Since all irreducible finite dimensional representations of $\fh \otimes A$ are one dimensional, $M_{\beta} \neq 0$ for some $\beta \in \fh^{\ast}$. Note that $(\fg_{\alpha}\otimes A)M_{\beta} \subseteq M_{\alpha+\beta}$ for all $\alpha \in \Phi.$ Now the simplicity of $M$ implies that $M$ is a weight module. Since $M$ is finite dimensional there exists a maximal weight $\lambda \in \fh^{\ast}$ of $M$ and let $m$ be a nonzero vector in $M_{\lambda}$. Now it follows  that $m$ generates $M$ and $(\fn \otimes A)m=0$. Therefore $(\pi, M)$ is a highest weight representation.

\end{proof}

An ideal $J$ of $A$ of will be called an ideal of \emph{finite codimension} if $A/J$ is a finite dimensional vector space.
\begin{lem}\label{lemideal} Let $(\pi, M)$ be a finite dimensional irreducible representation of $(\fg\otimes A)$. Then there exists an ideal $J$ of $A$ of finite codimension such that
$(\fg\otimes J)M=0$.
\end{lem}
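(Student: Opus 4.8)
The plan is to exhibit $J$ explicitly as the set of elements of $A$ that annihilate all of $\fg$ in the representation, and then to verify that this set is automatically a finite-codimensional ideal. Concretely, I would set
$$J := \{a \in A : \pi(x \otimes a) = 0 \ \text{for all} \ x \in \fg\}.$$
By construction $(\fg \otimes J)M = 0$, so it remains only to check that $J$ has finite codimension in $A$ and that $J$ is an ideal of $A$.

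For finite codimension I would consider the linear map $\phi : A \to \Hom_{\C}(\fg, \gl(M))$ sending $a$ to the assignment $x \mapsto \pi(x \otimes a)$. Since $M$ is finite dimensional, $\gl(M)$ is finite dimensional, and since $\fg$ is finite dimensional, the target $\Hom_{\C}(\fg, \gl(M))$ is finite dimensional as well. As $\Ker \phi = J$, the quotient $A/J$ embeds into this finite-dimensional space, so $J$ has finite codimension.

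The crux of the argument, and the one point where the structure of $\fg$ is genuinely used, is showing that $J$ is an ideal. Here I would invoke the fact that a simple Cartan type Lie superalgebra is perfect, i.e. $\fg = [\fg, \fg]$, so every element of $\fg$ is a linear combination of brackets $[y,z]$ with $y, z \in \fg$. Given $a \in J$ and $b \in A$, and writing $x = [y, z]$, the defining identity of the bracket on $\fg \otimes A$ gives
$$\pi(x \otimes ab) = \pi([y \otimes a, z \otimes b]) = [\pi(y \otimes a), \pi(z \otimes b)] = 0,$$
the final equality holding because $\pi(y \otimes a) = 0$ for $a \in J$. By linearity this yields $\pi(\fg \otimes ab) = 0$, that is $ab \in J$, so $J$ is indeed an ideal. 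Combining the three observations proves the lemma.

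I expect the perfectness of $\fg$ to be the decisive ingredient: without it the candidate $J$ would be merely a finite-codimensional subspace and need not absorb multiplication by $A$. Since the only property of $\fg$ that enters is $\fg = [\fg, \fg]$, I would make sure to record, from the structure theory of Cartan type superalgebras recalled in Section 2, that these algebras are simple and hence perfect.
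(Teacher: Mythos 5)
Your proof is correct, but it follows a genuinely different route from the paper's. The paper first invokes its highest weight lemma (Lemma \ref{lemhw}) to get a highest weight vector of weight $\mu$, builds for each positive root $\beta$ an ideal $J_{\beta}$ as the kernel of the map $a \mapsto \bigl(m \otimes x \mapsto (x\otimes a)m\bigr)$ on $M_{\mu}\otimes \fg_{-\beta}$, intersects these finitely many ideals to get $J$, shows $(\fg\otimes J)M_{\mu}=0$ using the triangular decomposition, and finally uses irreducibility to propagate annihilation from the highest weight space to all of $M$ (the set $N=\{m \in M \mid (\fg\otimes J)m=0\}$ is a submodule because $\fg\otimes J$ is an ideal of $\fg\otimes A$, hence $N=M$). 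You instead take $J=\Ann_A(M)$ directly: finite codimension comes from embedding $A/J$ into the finite-dimensional space $\Hom_{\C}(\fg,\gl(M))$, and the ideal property comes from perfectness $\fg=[\fg,\fg]$, which holds because $\fg$ is simple (hence non-abelian, so the nonzero ideal $[\fg,\fg]$ must be all of $\fg$). Your argument is shorter and strictly more general: it never uses irreducibility or weight theory, so it proves the statement for \emph{every} finite-dimensional representation of $\fg\otimes A$, and it produces the canonical maximal such ideal, namely the annihilator $\Ann_A(M)$ that the paper introduces separately later. What the paper's argument buys is independence from perfectness, staying entirely within the highest-weight framework it has already set up and reuses throughout; but since simplicity of $\fg$ is a standing assumption, your use of perfectness costs nothing here. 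Do make the perfectness claim explicit with its one-line justification, as you propose, since the paper's Section 2 states simplicity but never records $\fg=[\fg,\fg]$.
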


\begin{proof} By  Lemma \ref{lemhw} $M$ is a highest weight module. Let $\mu$ be the highest weight of $M$, $M_{\mu}$ be the weight space of weight $\mu$ and $\beta \in \Phi^+$. Consider the map
$L: A \longrightarrow \Hom_{\C}(M_{\mu}\otimes \fg_{-\beta}, M_{\mu-\beta})$ defined by
$$L(a)(m\otimes x)= (x\otimes a)m, a \in A, m \in M_{\mu}, x \in \fg_{-\beta}.$$
Note that $L$ is a linear map. Let $J_{\beta}: = \Ker (L)$. One can easily verify that $J_{\beta}$ is an ideal of $A$. Since $M$ and $\fg$ are finite dimensional, it follows that  that $J_{\beta}$ is an ideal of $A$ of  finite codimension for any $\beta \in \Phi^+$.
Let $$J:= \bigcap_{\beta \in \Phi^+}J_{\beta}.$$
Note that this intersection is a finite intersection as $\fg$ is finite dimensional it has only finitely many positive roots. $J$ and $J_{\beta}$ are ideals of $A$ and
$J\subseteq J_{\beta}$. Then by the third isomorphism theorem for rings it follows that $J_{\beta}/J$ is an ideal of $A/J$ and $A/J \cong (A/J_{\beta})/(J_{\beta}/J)$. Since $A/J_{\beta}$ is finite dimensional, it follows from the isomorphism that $A/J$ is finite dimensional. Thus $J$ is an ideal  of  finite codimension.

We then have $(\fn^{-} \otimes J)M_{\mu}=0$. Since  $M$ is a highest weight module of high weight $\mu$,  $(\fn \otimes J)M_{\mu}=0$.
Since $\fh \otimes J \subseteq [\fn^{-} \otimes J, \fn \otimes J]$, it follows that $(\fh \otimes J)M_{\mu}=0.$ Thus we have
$(\fg \otimes J)M_{\mu}=((\fn^- \oplus \fh \oplus \fn) \otimes J)M_{\mu}=0.$

Let $N= \{m \in M|(\fg \otimes J)m=0\}$. Since $M_{\mu} \neq 0$, $N$ is a nonempty subset of $M$.  Using the fact that $(\fg \otimes J)$ is an ideal of $(\fg \otimes A)$, we see that
$N$ is a submodule of $M$. Since $M$ is irreducible, it follows that $N=M$. That is $(\fg \otimes J)M=0$

\end{proof}

\subsection{}

Recall that we assume  $A$ is finitely generated. Let $\MaxSpec (A)$ denote the maximal ideal spectrum of $A$ and $\fg$ be a finite dimensional Lie (super)algebra.

Let $\fm_1, \dots, \fm_r \in \MaxSpec (A)$ be pairwise distinct and $n_1, \dots, n_r \in \Z^{+}$. Let $\varphi_{\fm_1^{n_1}, \dots,  \fm_r^{n_r}}$ be the algebra map defined by composing the map
$$ \fg \otimes A \longrightarrow (\fg\otimes A)/(\fg\otimes (\fm_1^{n_1} \dots \fm_r^{n_r}))$$
and the isomorphism

$$(\fg\otimes A)/(\fg\otimes (\fm_1^{n_1} \dots \fm_r^{n_r})) \cong  (\fg \otimes A/\fm_{1}^{n_1}) \oplus \dots \oplus (\fg \otimes A/\fm_{r}^{n_r}).$$

That is, $$\varphi_{\fm_1^{n_1}, \dots,  \fm_r^{n_r}}: \fg \otimes A \longrightarrow \bigoplus_{i=1}^r (\fg \otimes A/\fm_{i}^{n_i}).$$

For $i=1, \dots, r$, let $(\pi_i, M_i)$ be a finite dimensional irreducible representation of $(\fg \otimes A/\fm_{i}^{n_i})$. Then the composition of $\varphi_{\fm_1^{n_1}, \dots, \fm_r^{n_r}}$ with the map

$$ \bigoplus_{i=1}^r (\fg \otimes A/\fm_{i}^{n_i})\longrightarrow \End (M_1\otimes \dots \otimes M_r )$$
is a representation of $\fg \otimes A$ and is denoted by

\begin{equation}\label{evrep}
\varphi_{\fm_1^{n_1}, \dots, \fm_r^{n_r}}(\pi_1, \dots, \pi_r).
\end{equation}
We denote the corresponding module by

\begin{equation}\label{evmod}
\varphi_{\fm_1^{n_1}, \dots, \fm_r^{n_r}}(M_1, \dots, M_r).
\end{equation}
Representations of the form $\varphi_{\fm_i}(M_i)$  are called \emph{evaluation representations} and corresponding modules are called \emph{evaluation representations}.


\begin{lem}\label{lemred} \cite{CFK, Li, NSS} \label{fdred} Let $\fg$ be a reductive Lie algebra. We let $\fg'=[\fg, \fg]$  be its semisimple part and $Z(\fg)$ denote its center, so that $\fg= \fg' \oplus Z(\fg)$.  Then all irreducible finite dimensional representations of $\fg\otimes A$ are of the form $\rho \otimes \pi$, where $\rho \in (Z(\fg)\otimes A)^{\ast}$ is an irreducible finite dimensional representation of abelian Lie algebra $Z(\fg)\otimes A$ and $\pi$ is an irreducible finite dimensional representation of $\fg'\otimes A$. Such $\pi$ are precisely the irreducible evaluation representations of $\fg'\otimes A$.
\end{lem}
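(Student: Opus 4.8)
The plan is to separate the statement into two parts: the factorization $M\cong\rho\otimes\pi$ coming from $\fg=\fg'\oplus Z(\fg)$, and the identification of the irreducible finite-dimensional $\fg'\otimes A$-modules with evaluation modules. For the first part I would note that, because $Z(\fg)$ is the centre of $\fg$, one has $[Z(\fg)\otimes A,\ \fg\otimes A]\subseteq [Z(\fg),\fg]\otimes A=0$, so $Z(\fg)\otimes A$ is a \emph{central} abelian subalgebra of $\fg\otimes A$. Given an irreducible finite-dimensional module $M$, every element of $Z(\fg)\otimes A$ is then a $\fg\otimes A$-module endomorphism, so Schur's Lemma over $\C$ makes it act by a scalar; this defines a character $\rho\in(Z(\fg)\otimes A)^{\ast}$. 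Since the centre acts by scalars, any $\fg'\otimes A$-submodule of $M$ is automatically a $\fg\otimes A$-submodule, whence the restriction $\pi$ of $M$ to $\fg'\otimes A$ is irreducible and $M\cong\rho\otimes\pi$; the converse, that every such tensor product is irreducible, is immediate. This reduces the whole statement to the case of semisimple $\fg=\fg'$.

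For the semisimple case I would first reproduce the argument of Lemma~\ref{lemideal} to find an ideal $J\subseteq A$ of finite codimension with $(\fg'\otimes J)M=0$, so that the action factors through the finite-dimensional algebra $\fg'\otimes(A/J)$. As $A/J$ is a finite-dimensional commutative algebra, the Chinese Remainder Theorem gives $A/J\cong\bigoplus_{i=1}^r A/\fm_i^{n_i}$ for distinct maximal ideals $\fm_i$ and positive integers $n_i$, hence $\fg'\otimes(A/J)\cong\bigoplus_{i=1}^r(\fg'\otimes A_i)$, a direct sum of pairwise-commuting ideals, where $A_i:=A/\fm_i^{n_i}$. The standard tensor-product decomposition of a finite-dimensional irreducible module over a direct sum of commuting ideals, via the Jacobson density theorem over $\C$, then gives $M\cong M_1\otimes\dots\otimes M_r$ with each $M_i$ an irreducible $\fg'\otimes A_i$-module.

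It remains to show that each $M_i$ is an evaluation module, and this is the step I expect to be the main obstacle. Writing $\bar{\fm}_i:=\fm_i/\fm_i^{n_i}$ for the nilpotent maximal ideal of the local algebra $A_i$, the key claim is that the ideal $I:=\fg'\otimes\bar{\fm}_i$ acts as zero on $M_i$. For this I would use two facts: $I$ is a nilpotent Lie algebra, since its lower central series satisfies $C^k(I)=\fg'\otimes\bar{\fm}_i^{\,k}$ and $\bar{\fm}_i^{\,n_i}=0$; and $[\fg'\otimes A_i,\ I]=\fg'\otimes(A_i\bar{\fm}_i)=I$, using that $A_i$ is unital and $[\fg',\fg']=\fg'$. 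By Lie's Theorem the solvable ideal $I$ has a common eigenvector in $M_i$ affording a weight $\chi\in I^{\ast}$, and the invariance lemma underlying Lie's Theorem gives $\chi\big([\fg'\otimes A_i,\ I]\big)=0$; combined with $[\fg'\otimes A_i,\ I]=I$ this forces $\chi=0$, so $I$ acts nilpotently. The subspace of $I$-invariants of $M_i$ is then nonzero and, because $I$ is an ideal, is a $\fg'\otimes A_i$-submodule, hence equals $M_i$ by irreducibility. Thus $M_i$ factors through $\fg'\otimes(A_i/\bar{\fm}_i)\cong\fg'$ (note $A/\fm_i\cong\C$ by the Nullstellensatz), i.e. $M_i$ is the evaluation at $\fm_i$ of an irreducible $\fg'$-module. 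Consequently $M\cong M_1\otimes\dots\otimes M_r$ is an irreducible evaluation representation, and the converse --- that $\varphi_{\fm_1^{n_1},\dots,\fm_r^{n_r}}(M_1,\dots,M_r)$ is irreducible for distinct $\fm_i$ --- follows from the surjectivity of $\varphi_{\fm_1^{n_1},\dots,\fm_r^{n_r}}$ together with the Jacobson density theorem.
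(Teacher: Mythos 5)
Your proposal is correct, but there is nothing internal to compare it against: the paper does not prove this lemma at all, it simply quotes it from \cite{CFK,Li,NSS}. What you have written is essentially a self-contained reconstruction of the standard argument in those references. The reduction to the semisimple case is right: $Z(\fg)\otimes A$ is central in $\fg\otimes A$, so by Schur's lemma it acts by a character $\rho$ on any finite-dimensional irreducible, and then every $\fg'\otimes A$-submodule is automatically a $\fg\otimes A$-submodule, giving $M\cong\rho\otimes\pi$. The crux, which you identified correctly, is that over a finite-dimensional local algebra $A_i$ with nilpotent maximal ideal $\bar{\fm}_i$ the ideal $I=\fg'\otimes\bar{\fm}_i$ annihilates every finite-dimensional irreducible; your argument --- $[\fg'\otimes A_i, I]=I$ since $[\fg',\fg']=\fg'$, so the weight supplied by Lie's theorem is killed by the invariance lemma, and the space of $I$-invariants is then a nonzero submodule --- is exactly the classical proof. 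Two minor points. First, your appeal to the Chinese Remainder Theorem is slightly imprecise: for a general ideal $J$ of finite codimension one gets $A/J\cong\bigoplus_i A/Q_i$ with $Q_i$ an $\fm_i$-primary ideal, which need not be a power $\fm_i^{n_i}$. The repair is the one the paper itself uses in the proof of Lemma \ref{fsupp}: since $A$ is finitely generated, every ideal contains a power of its radical \cite[Proposition 7.17]{AM}, so one may shrink $J$ to $\fm_1^{l}\cdots\fm_r^{l}\subseteq J$ and apply CRT to that ideal, whose factors really are the $A/\fm_i^{l}$. Second, when you "reproduce the argument of Lemma \ref{lemideal}", note that that lemma is stated for Cartan type superalgebras; its proof (via Lemma \ref{lemhw}) uses only a triangular decomposition with abelian Cartan subalgebra, so it does carry over verbatim to the reductive Lie algebra $\fg'$, but this transfer deserves to be said explicitly. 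With these two small adjustments your proof is complete.
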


\begin{lem}\cite[Proposition 8.4]{Che} \label{Che} Let $C$ and $D$ be Lie superalgebras. Every irreducible $C\otimes D$ module $V$ is either isomorphic to $V_C \otimes V_D$, where $V_C$ (resp. $V_D)$ is an irreducible $C$ (resp.$C$) module, or it is isomorphic to a proper subspace of $V_C \otimes V_D$ .

\end{lem}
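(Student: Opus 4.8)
The plan is to reduce the statement to the structure theory of irreducible modules over a tensor product of associative superalgebras, using the super versions of Schur's lemma and the density theorem. Since $\bu(C\oplus D)\cong\bu(C)\otimes\bu(D)$, a "$C\otimes D$-module" is the same as a module on which $\bu(C)$ and $\bu(D)$ act through supercommuting subalgebras of $\End(V)$. So the first step I would carry out is to record the super Schur lemma: for an irreducible supermodule $V$ over any superalgebra, the endomorphism superalgebra $\End(V)$ is a super division algebra over $\C$, and is therefore either $\C$ (in which case I call $V$ of \emph{type M}) or the rank-two algebra $\C\oplus\C\theta$ with $\theta$ odd (type Q). These two types are exactly what will distinguish the two alternatives in the conclusion.

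Next I would analyze the restriction of $V$ to $C$. Choose a minimal nonzero graded $C$-submodule $V_C\subseteq V$ (which exists in the finite-dimensional setting that the paper actually uses). For homogeneous $d\in\bu(D)$, multiplication by $d$ supercommutes with the $C$-action, so $d\cdot V_C$ is again an irreducible $C$-submodule isomorphic to $V_C$ up to parity; hence $\sum_{d}d\cdot V_C$ is a nonzero $C\oplus D$-submodule, which by irreducibility of $V$ is all of $V$. Thus $V$ is $V_C$-isotypic as a $C$-module. The multiplicity space $V_D:=\Hom_C(V_C,V)$ then carries a natural $D$-module structure, and I would check it is irreducible: any proper $D$-submodule would yield a proper $C\oplus D$-submodule of $V$.

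The key step is then the canonical evaluation map $\Phi\colon V_C\otimes V_D\to V$, which is a homomorphism of $C\oplus D$-modules and, by the super density theorem, is surjective. The nature of $\Phi$ is dictated by the types of $V_C$ and $V_D$. If at least one of them is type M, then $V_C\otimes V_D$ is already irreducible and $\Phi$ is an isomorphism, so $V\cong V_C\otimes V_D$. If both are type Q, with odd endomorphisms $\theta_C$ and $\theta_D$, then $\theta_C\otimes\theta_D$ is an \emph{even} $C\oplus D$-module endomorphism of $V_C\otimes V_D$ that is not a scalar; hence $V_C\otimes V_D$ is reducible and splits into two isomorphic proper submodules, with $\Phi$ identifying $V$ with one of them. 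This is the "proper subspace" alternative. The same dichotomy can be read off from the dense images: one is reduced to classifying simple modules over $M(m|n)\otimes M(p|q)$, $M(m|n)\otimes Q(r)$, and $Q(r)\otimes Q(s)$, the last of which is $\cong M(rs|rs)$ and so possesses a simple module of exactly half the expected dimension.

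I expect the main obstacle to be the type $Q\otimes Q$ case together with the careful bookkeeping of the Koszul signs: one must verify that $\theta_C\otimes\theta_D$ is genuinely even, commutes with the entire $C\oplus D$-action, and is non-scalar, and that the resulting eigenspace decomposition is by \emph{submodules} rather than merely subspaces. A secondary technical point is ensuring the existence of the minimal submodule $V_C$ and the applicability of the density theorem in the super setting; in the finite-dimensional situation relevant to this paper both are automatic.
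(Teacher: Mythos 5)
The paper itself contains no proof of this lemma --- it is quoted directly from \cite{Che} --- so your proposal can only be measured against the standard argument, whose skeleton (super Schur lemma, type $M$/type $Q$ dichotomy, isotypic restriction to $C$, evaluation map, splitting in the $Q$--$Q$ case) you reproduce. Within that skeleton, however, one central step is genuinely wrong as written. You define the multiplicity space $V_D:=\Hom_C(V_C,V)$ and claim it is an irreducible $D$-module because ``any proper $D$-submodule would yield a proper $C\oplus D$-submodule of $V$.'' That justification cannot work: for \emph{any} nonzero $D$-submodule $W\subseteq\Hom_C(V_C,V)$, the image $\Phi(V_C\otimes W)=\sum_{f\in W}f(V_C)$ is a nonzero $(C\oplus D)$-submodule of the irreducible module $V$, hence equals all of $V$; it is never proper, so no contradiction is produced. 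Worse, the claim itself is false precisely in the situation your proof most needs to handle, namely when $V_C$ has type $Q$.

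Concretely, let $C=\C h\oplus\C x$ with $h$ even, $x$ odd, $[h,x]=0$, $[x,x]=2h$, acting irreducibly on $V=\C^{1|1}$ by $h\mapsto\operatorname{id}$ and $x\mapsto$ the odd involution interchanging the two homogeneous lines, and let $D$ act by zero. Then $V_C=V$ is of type $Q$, while $\Hom_C(V_C,V)=\End_C(V_C)$ is two dimensional with trivial $D$-action, so it has many proper nonzero $D$-submodules; moreover, with this choice of $V_D$ your assertion ``if at least one factor is type $M$ then $\Phi$ is an isomorphism'' also fails, since $\dim(V_C\otimes V_D)=2\dim V$. (In general, whenever $V\cong V_C\otimes V_D$ with $V_C$ of type $Q$ and $V_D$ of type $M$, one has $\Hom_C(V_C,V)\cong\End_C(V_C)\otimes V_D$, the direct sum of $V_D$ and a parity-reversed copy of it.) The repair is small and yields exactly the standard proof: take $V_D$ to be an \emph{irreducible $D$-submodule} of $\Hom_C(V_C,V)$, which exists by finite dimensionality. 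Then $\Phi\colon V_C\otimes V_D\to V$ is a nonzero homomorphism of $(C\oplus D)$-modules, hence surjective by irreducibility of $V$ alone (no density theorem is needed for this), and your type analysis goes through: if $V_C$ or $V_D$ has type $M$, then $V_C\otimes V_D$ is irreducible and $\Phi$ is an isomorphism; if both have type $Q$, then $\theta_C\otimes\theta_D$ is an even, non-scalar module endomorphism, its two eigenspaces are proper submodules interchanged by the odd endomorphism $\theta_C\otimes 1$, one of them is $\ker\Phi$, and $V$ is isomorphic to the other.
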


Let $(\pi, M)$ be a representation of $(\fg \otimes A)$.  Let $\Ann_{A}(M)$ denote the annihilator ideal  of $M$ in $A$, i.e.
$\Ann_{A}(M)=\{a \in A| (\fg \otimes a)M=0\}$. By support of $M$ or $\pi$ we mean the support of the ideal $\Ann_{A}(M)$.

\begin{lem} \label{fsupp} Let $\fg$ be a reductive Lie algebra or a simple  Cartan type Lie superalgebra. Let $(\pi, M)$ be a finite dimensional irreducible representation of $(\fg\otimes A)$.
\begin{itemize}
\item[(a)]$(\pi, M)$ is a  representation  of the form (\ref{evrep}) if and only if the ideal  $\Ann_{A}(M)$ has finite support.
\item[(b)]$(\pi, M)$ is a tensor product of evaluation  representations if and only if $\Ann_{A}(M)$ is a radical ideal and has finite support.
\end{itemize}
\end{lem}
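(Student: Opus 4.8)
The plan is to translate the two representation-theoretic conditions into statements about the ideal $\Ann_A(M)$ and then to reconstruct the tensor-product structure by means of Lemma~\ref{Che}. A preliminary point is that, since $\fg$ (or its semisimple part $[\fg,\fg]$ in the reductive case) is perfect, $\Ann_A(M)$ is genuinely an ideal of $A$: writing $z=\sum_i[z_i',z_i'']$ gives $z\otimes ab=\sum_i[z_i'\otimes a,\,z_i''\otimes b]$, which annihilates $M$ as soon as $a\in\Ann_A(M)$. In the reductive case I would first invoke Lemma~\ref{lemred} to peel off the central character and reduce every support statement to the evaluation datum of the $[\fg,\fg]\otimes A$-factor, so that it suffices to treat $\fg$ simple of Cartan type (equivalently, perfect). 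The commutative-algebra fact used throughout is that, because $A$ is a finitely generated (hence Noetherian and Jacobson) $\C$-algebra, an ideal $J$ has finite support if and only if it has finite codimension: if only finitely many maximal ideals $\fm_1,\dots,\fm_r$ contain $J$, then every prime over $J$, being an intersection of maximal ideals each containing $J$, must equal one of the $\fm_i$, so $A/J$ has Krull dimension $0$, hence is Artinian and finite dimensional over $\C$; the converse is immediate.

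For part (a), the forward implication is direct: if $(\pi,M)$ has the form~(\ref{evrep}), then by construction $\pi$ factors through $\fg\otimes A/(\fm_1^{n_1}\cdots\fm_r^{n_r})$, so $\fm_1^{n_1}\cdots\fm_r^{n_r}\subseteq\Ann_A(M)$, and hence the maximal ideals containing $\Ann_A(M)$ lie among $\fm_1,\dots,\fm_r$, giving finite support. For the converse, set $J:=\Ann_A(M)$ and suppose its support is finite. By the observation above $A/J$ is Artinian, its Jacobson radical is nilpotent, and $\fm_1^{n_1}\cdots\fm_r^{n_r}\subseteq J$ for suitable $n_i\in\Z^+$. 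The $\fm_i^{n_i}$ are pairwise comaximal, so the Chinese Remainder Theorem presents $\fg\otimes A/J$ as a quotient of $\bigoplus_{i=1}^r\fg\otimes A/\fm_i^{n_i}$, through which $\pi$ factors; an irreducible module over this finite direct sum is, by Lemma~\ref{Che}, built from irreducibles $M_i$ of the summands $\fg\otimes A/\fm_i^{n_i}$, which is exactly the data defining a representation of the form~(\ref{evrep}).

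For part (b), the idea is that ``$\Ann_A(M)$ radical'' is precisely the condition that all multiplicities $n_i$ equal $1$, i.e.\ that the evaluation is at honest points rather than infinitesimal neighbourhoods. If $(\pi,M)=\varphi_{\fm_1,\dots,\fm_r}(M_1,\dots,M_r)$ is a tensor product of nontrivial evaluation representations, a direct computation of the action of $z\otimes a$ on $M_1\otimes\cdots\otimes M_r$ shows that $a$ annihilates $M$ iff $a\in\fm_i$ for every $i$, so $\Ann_A(M)=\bigcap_i\fm_i$ is radical of finite support. Conversely, if $J=\Ann_A(M)$ is radical of finite support, the Nullstellensatz gives $J=\sqrt{J}=\bigcap_{i=1}^r\fm_i$, so all exponents in the argument of part~(a) are $1$; CRT then yields $A/J\cong\bigoplus_i\C$, the map $\pi$ factors through $\bigoplus_i\fg$, and Lemma~\ref{Che} realizes $M$ as a tensor product of irreducible $\fg$-modules, that is, a tensor product of evaluation representations.

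The step I expect to be the genuine obstacle is the application of Lemma~\ref{Che} in the super setting: it only guarantees that an irreducible module over the direct sum is isomorphic to $M_1\otimes\cdots\otimes M_r$ \emph{or to a proper submodule thereof}, the latter reflecting the ``type $Q$'' phenomenon for Lie superalgebras. To land exactly on the form~(\ref{evrep}) one must control this ambiguity, either by checking that the relevant outer tensor products of these irreducibles stay irreducible or by adopting the convention that evaluation representations include such irreducible submodules. The remaining ingredients, namely that $\Ann_A(M)$ is an ideal and that finite support forces finite codimension, are routine once the Jacobson/Artinian dictionary above is in place.
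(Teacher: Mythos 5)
Your proposal is correct and follows essentially the same route as the paper: the forward direction via the product ideal $\fm_1^{n_1}\cdots\fm_r^{n_r}$ annihilating $M$, and the converse by placing a product of powers of the $\fm_i$ inside $\Ann_A(M)$ (the paper cites the fact that in a finitely generated algebra every ideal contains a power of its radical, which your Jacobson/Artinian argument reproves), then factoring $\pi$ through $\bigoplus_{i}\fg\otimes A/\fm_i^{n_i}$ and invoking Lemma~\ref{Che}. The one obstacle you flag, namely that Lemma~\ref{Che} allows the irreducible module to be a proper subspace of the outer tensor product rather than the full tensor product, is equally present and silently ignored in the paper's own proof, so your treatment is no weaker than the original.
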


\begin{proof} We prove part (a). Part (b) is argued similarly.

If $\pi$ is of the from (\ref{evrep}), then $\pi= \varphi_{\fm_1^{n_1}, \dots, \fm_s^{n_s}}(\pi_1, \dots, \pi_s)$ for some pairwise distinct $\fm_1, \dots, \fm_s \in MaxSpec (A)$, ${n_1},\dots, n_s \in \Z^{+}$, and some representation $\pi_1, \dots, \pi_s$.
Then it is clear that the  ideal $J= \fm_1^{n_1} \cdots \fm_s^{n_s}$ has finite support and $\pi(\fg \otimes J)=0$. Thus $\pi$ has finite support.

Suppose that $\pi(\fg \otimes J)=0$ for some ideal $J$ of finite support.  Then the radical ideal of $J$ is  $\fm_1 \cdots \fm_s$ for some distinct  $\fm_1, \dots, \fm_s \in \MaxSpec (A)$. Since $A$ is finitely generated every ideal of $A$ contains some power of its radical \cite[Proposition 7.17]{AM}. Thus the ideal $(\fm_1 \cdots \fm_s)^{l}=\fm_1^{l} \cdots \fm_s^{l} \subseteq J$ for some $l \in \Z^{+}$. Then $\pi$ factors through the map $$\varphi_{\fm_1^{l},  \cdots,  \fm_s^{l}}: \fg \otimes A \to \bigoplus_{i=1}^s (\fg \otimes A/\fm_{i}^{l}).$$

It follows from Lemma \ref{Che} that finite dimensional irreducible representations  of $\bigoplus_{i=1}^s (\fg \otimes A/\fm_{i}^{l})$ are of the form $(\pi_1, M_1) \otimes \cdots \otimes (\pi_s, M_s)$, where $(\pi_i, M_i)$ is a finite dimensional irreducible representation of $\fg \otimes A/\fm_{i}^{l}$. Therefore  $\pi $ is of the form (\ref{evrep}).

\end{proof}

\section{Generalized Kac Modules}

In this section we define Generalized Kac modules and prove some basic properties of these modules.

\subsection{}Recall that  we assume  $A$ is finitely generated and  we fixed a  decomposition  $\fg =\fg_{-1} \oplus \fg_0 \oplus \fg^+$.

Let $V$ be a finite dimensional irreducible $(\fg_0 \otimes A)$-module.  Consider  the induced representation of $\fg\otimes A$,

$$K(V):= U(\fg\otimes A) \otimes_{U((\fg_0 \oplus \fg^+)\otimes A)}V$$

where $V$ is viewed as $((\fg_0 \oplus \fg^+)\otimes A)$ -module via inflation through the canonical quotient map$((\fg_0 \oplus \fg^+)\otimes A)\to (\fg_0 \otimes A)$. We call $K(V)$ \emph{Generalized Kac Module}. The reason we call these Generalized Kac Module is that these are precisely  Kac modules when $A= \C$(cf. \cite{Kac}).


As a $\Z_2$-graded vector space $K(V) \cong \Lambda(\fg_{-1}\otimes A)\otimes V$. The $\Z$-grading on $\Lambda(\fg_{-1}\otimes A)$ induces a $Z$-grading on $K(V)$
\begin{equation*}
K(V)= K(V)_0 \oplus K(V)_1\oplus \dots =\bigoplus_{k \geq 0} \Lambda^k(\fg_{-1}\otimes A)\otimes V.
\end{equation*}

Since $\fg \otimes A$ is a Lie superalgebra by the PBW Theorem we can identify $\fg \otimes A$ with its image in $U= U(\fg \otimes A)$.  The standard  filtration $\{U_i\}$ of $U$ is defined as follows
$U_0 =\C, U_1= \C + (\fg \otimes A),$  and  $U_i = U_1^i;$  that is, $U_i$ is the span of all products $u_1u_2\dots u_i$ with $u_l \in U_1.$

Let $U_+= \bigoplus_{i>0} U_i$. Then $U_+$ is a subalgebra of $U$ and as a vector space
\begin{equation}\label{eqkv} K(V)=(1\otimes V) \oplus (U_+(\fg_{-1}\otimes A)\otimes V)
\end{equation}
\begin{lem} \label{gkm} Let $V$ be a finite dimensional irreducible $(\fg_0 \otimes A)$-module and  $K(V)$ be the associated Generalized Kac Module.

\begin{itemize}
\item[(a)]$K(V)$ contains a unique proper maximal $\Z$-graded submodule $N(V).$
\item[(b)] $L(V):= K(V)/N(V)$ is an irreducible $(\fg\otimes A)$-module.
\item[(c)] $L(V_1) \cong L(V_2)$ as $(\fg\otimes A)$-modules if and only if $V_1 \cong V_2$ as $(\fg_0\otimes A)$-modules.
\item[(d)]$\Ann_A(L(V))= \Ann_A(V)$.

\end{itemize}
\end{lem}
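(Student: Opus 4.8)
The plan is to establish the four parts of Lemma~\ref{gkm} by exploiting the $\Z$-grading on $K(V)$, with the bottom piece $K(V)_0 = 1 \otimes V$ playing the role of a ``top'' that controls everything. The overarching strategy is that a $\Z$-graded submodule is determined by how it meets each graded piece, and the irreducibility of $V$ as a $(\fg_0 \otimes A)$-module forces submodules either to contain $K(V)_0$ entirely or to avoid it entirely.

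For part (a), I would first observe that any $(\fg \otimes A)$-submodule $W$ of $K(V)$ that is $\Z$-graded decomposes as $W = \bigoplus_{k \geq 0} (W \cap K(V)_k)$, and in particular $W \cap K(V)_0$ is a $(\fg_0 \otimes A)$-submodule of $K(V)_0 \cong V$. Since $V$ is irreducible, $W \cap K(V)_0$ is either $0$ or all of $V$. If $W \cap K(V)_0 = V$, then $W$ contains the generator $1 \otimes V$ and hence $W = K(V)$, so any \emph{proper} graded submodule lies in $\bigoplus_{k \geq 1} K(V)_k$. Using the decomposition~\eqref{eqkv}, I would then define $N(V)$ to be the sum of all proper $\Z$-graded submodules; this sum is again a proper graded submodule precisely because every proper graded submodule misses degree $0$, so the sum also misses degree $0$ and is therefore proper. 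This makes $N(V)$ the unique maximal proper $\Z$-graded submodule.

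For part (b), irreducibility of $L(V) = K(V)/N(V)$ requires a little care because $N(V)$ is only maximal among \emph{graded} submodules, whereas irreducibility concerns \emph{all} submodules. Here I would argue that any nonzero submodule of $K(V)/N(V)$, when pulled back, gives a submodule of $K(V)$ strictly containing $N(V)$; the key point is that since the action of $\fg \otimes A$ shifts $\Z$-degree in a controlled way and $V$ sits in the lowest degree, any such submodule must meet $K(V)_0$ nontrivially modulo $N(V)$, forcing it to be everything. Part (c) follows by identifying $V$ as an invariant of $L(V)$: one recovers $V$ as the degree-zero component (equivalently, the top weight space under the $\Z$-grading), so an isomorphism $L(V_1) \cong L(V_2)$ restricts to an isomorphism of the lowest graded pieces $V_1 \cong V_2$, and conversely an isomorphism $V_1 \cong V_2$ induces one on the induced modules and descends to the irreducible quotients. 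Part (d) should follow from $\Ann_A(V) \subseteq \Ann_A(K(V))$ via the tensor description $K(V) \cong \Lambda(\fg_{-1} \otimes A) \otimes V$ together with the fact that $\fg \otimes \Ann_A(V)$ acts compatibly, and the reverse inclusion $\Ann_A(L(V)) \subseteq \Ann_A(V)$ from the fact that $V$ reappears inside $L(V)$.

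**The hard part will be** part (b), specifically bridging the gap between ``maximal graded submodule'' and genuine irreducibility of the quotient, since $L(V)$ could a priori have ungraded submodules that do not arise from graded ones. I expect to handle this by showing that the $\Z$-grading descends to $L(V)$ and that any submodule generated by a nonzero element must, after projecting to the lowest nonzero graded component and applying elements of $\fg^+ \otimes A$ to climb down toward degree zero, hit the image of $K(V)_0$, which generates all of $L(V)$; verifying that this ``climbing down'' actually reaches degree zero without vanishing is the crux, and it relies essentially on the nondegeneracy of the bracket pairing $\fg^+ \times \fg_{-1} \to \fg_0$ coming from the Cartan-type structure.
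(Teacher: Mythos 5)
Your part (a) is essentially correct, but note that it only establishes maximality of $N(V)$ among \emph{graded} submodules, which (as you acknowledge) is weaker than what part (b) needs. A secondary problem: in part (d) your claimed inclusion $\Ann_A(V)\subseteq\Ann_A(K(V))$ is false. If $a\in\Ann_A(V)$ and $0\neq x\otimes a\in\fg_{-1}\otimes A$, then $(x\otimes a)(1\otimes v)=(x\otimes a)\otimes v\neq 0$ in $\Lambda^1(\fg_{-1}\otimes A)\otimes V$, so the annihilator of the induced module $K(V)$ is essentially trivial. The correct statement is that $(\fg\otimes a)K(V)$ is a \emph{proper submodule} of $K(V)$, hence is killed in the quotient $L(V)$; this is precisely what the paper proves (separately, in Lemma \ref{lemvlv}), using $[\fg_{-1},\fg_{-1}]=0$ and the decomposition \eqref{eqkv}.

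The genuine gap is in part (b), exactly at the point you flag as the hard part, and your proposed mechanism does not close it. First, if a submodule $W$ with $N(V)\subsetneq W\subseteq K(V)$ is not graded, the graded components of an element of $W$ need not lie in $W$, so "projecting to the lowest nonzero graded component" leaves the submodule; and applying $y\in\fg_j\otimes A$, $j\geq 1$, to the whole inhomogeneous element $w=w_0+\dots+w_n$ gives $\sum_{k\geq 1}yw_k$ (note $yw_0=0$, there being no negative degrees), which shuffles components without ever isolating $w_0$. Second, the nondegeneracy of the pairing $\fg^+\times\fg_{-1}\to\fg_0$ only helps with graded submodules, and for those nothing remains to be done after part (a): a nonzero proper graded submodule of $L(V)$ would pull back to a proper graded submodule of $K(V)$ strictly containing $N(V)$, contradicting maximality. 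What is missing is an argument for \emph{arbitrary} submodules, and this is the paper's key idea: if $w=w_0+\dots+w_n\in W$ with $w_0\neq 0$, then since $V=U(\fg_0\otimes A)w_0$ by irreducibility and $K(V)_k=\Lambda^k(\fg_{-1}\otimes A)\otimes V$, each component satisfies $w_k=u_k(w_0)$ for some degree-raising $u_k\in U((\fg_{-1}\oplus\fg_0)\otimes A)$; hence $w=(1+u)w_0$ with $u=\sum_{k\geq 1}u_k$, and $u$ is nilpotent as an operator on $K(V)$ (it strictly raises the $\Z$-degree, and the raising operators come from the odd abelian space $\fg_{-1}\otimes A$), so $1+u$ is invertible and $w_0=(1+u)^{-1}w\in W$, forcing $W=K(V)$. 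This straightening argument shows in one stroke that \emph{every} proper submodule, graded or not, lies in $\bigoplus_{k>0}K(V)_k$, which simultaneously yields the existence and uniqueness of the maximal submodule, its gradedness, and the irreducibility of $L(V)$. Without this step (or an equivalent substitute), your part (b) remains unproven.
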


\begin{proof}

Let $K(V)^+:= \bigoplus_{k>0}K(V)_k$. Note that  $K(V)^+$ is a proper subspace of $K(V)$. Let $N(V)$  be a maximal submodule of $K(V)$. Part (a) follows from the following claim.

\vspace{.1 in}

 \textbf{Claim:}  $N(V)$ is a proper graded subspace of $K(V)^+$.

\vspace{.1 in}

  We assume the contrary that $N(V) \not\subset K(V)^+$. Then we can choose an element $w \in N(V)$, where $w= w_0+ \dots +w_n$ with $w_k \in K(V)_k$ such that $w_0 \neq 0$.

In view of the irreducibility of $V$ and the definition of an induced module, for every $k>0$ there must exist a $u_k \in U_k((\fg_{-1}\oplus \fg_0)\otimes A)$ such that such that $w_k =u_k(w_0)$, i.e.
$$w= w_0 +(\sum_{k=1}^nu_k)(w_0)=(1+u)w_0.$$ From the properties of grading it follows that $u$ is a nilpotent operator on $K(V)$. Therefore there exists a $u' \in  U((\fg_{-1}\oplus \fg_0)\otimes A)$ which is an inverse for
$1+u$ on $K(V)$, and we have $w_0=u'w \in N(V)$. But then $N(V)\supset K(V)$, which is a contradiction. This proves the claim that $N(V) \subset K(V)^+$.

 We now prove that $N(V) =\bigoplus_{k\geq 0} (N(V)\cap K(V)_k)$. We construct a submodule $N(V)'\supset N(V)$ as follows: $N(V)' = \bigoplus N(V)_k$, where $N(V)_k$ is the projection of $N(V)$ on the subspace $K(V)_k$. By part (a), $N(V)_0^{'}=0$. Thus $N(V)^{'} \neq K(V)$. Since $N(V)$ is maximal, it follows that $N(V)= N(V)^{'}$, and $N(V)\cap K(V)_k = N(V)_k$.

Part (b) follows from part (a). Parts (c) and (d) follow from definitions.

\end{proof}

\begin{lem} \label{lemvlv}Let $V$ be a finite dimensional irreducible $(\fg_0\otimes A)$-module and  $L(V)$ be the unique irreducible quotient of $K(V)$. Let  $J$ be an ideal of $A$. Then $(\fg_0\otimes J)V=0$ if and only if $(\fg\otimes J)L(V)=0$.
\end{lem}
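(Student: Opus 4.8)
The plan is to prove the two implications separately. The backward implication is immediate from the structure of $L(V)$ in degree zero. Recall from Lemma \ref{gkm} that $N(V)\subseteq K(V)^{+}=\bigoplus_{k>0}K(V)_{k}$, so the degree-zero component of $L(V)$ satisfies $L(V)_{0}=K(V)_{0}/N(V)_{0}=K(V)_{0}=1\otimes V$, and this identification is one of $(\fg_{0}\otimes A)$-modules, i.e. $L(V)_{0}\cong V$. Since $\fg_{0}$ lies in degree $0$ of the $\Z$-grading, the subalgebra $\fg_{0}\otimes J\subseteq\fg_{0}\otimes A$ preserves $L(V)_{0}$; hence if $(\fg\otimes J)L(V)=0$ then in particular $(\fg_{0}\otimes J)L(V)_{0}=0$, which under the isomorphism $L(V)_{0}\cong V$ gives $(\fg_{0}\otimes J)V=0$.

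For the forward implication, suppose $(\fg_{0}\otimes J)V=0$. Then the $(\fg_{0}\otimes A)$-action on $V$ factors through the quotient $\fg_{0}\otimes(A/J)$, so $V$ becomes a module over $(\fg_{0}\oplus\fg^{+})\otimes(A/J)$ by inflation, and I can form the Generalized Kac Module $\bar{K}(V)=U(\fg\otimes(A/J))\otimes_{U((\fg_{0}\oplus\fg^{+})\otimes(A/J))}V$ for the quotient algebra $\fg\otimes(A/J)$. The surjection of Lie superalgebras $\fg\otimes A\to\fg\otimes(A/J)$ induced by $A\to A/J$, together with the compatibility of the two actions on $V$, induces by functoriality of induction a surjective $\Z$-graded homomorphism of $(\fg\otimes A)$-modules $\Psi\colon K(V)\twoheadrightarrow\bar{K}(V)$, $u\otimes v\mapsto\bar{u}\otimes v$. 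In degree $0$ the map $\Psi$ is the identity $1\otimes V\to 1\otimes V$, so $\bar{K}(V)\neq 0$ and $\ker\Psi$ is a proper submodule of $K(V)$.

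To finish I use that $\ker\Psi\subseteq N(V)$. This holds because $L(V)$ is the unique irreducible quotient of $K(V)$, equivalently $N(V)$ is the largest proper submodule: by the $(1+u)$-invertibility argument in the proof of Lemma \ref{gkm}, every proper submodule of $K(V)$ lies in $K(V)^{+}$, hence so does the sum of all proper submodules, which is thus itself a proper submodule and must coincide with $N(V)$. Since $\fg\otimes J$ is the kernel of $\fg\otimes A\to\fg\otimes(A/J)$ it annihilates $\bar{K}(V)$, so $(\fg\otimes J)K(V)\subseteq\ker\Psi\subseteq N(V)$, and passing to the quotient $L(V)=K(V)/N(V)$ yields $(\fg\otimes J)L(V)=0$. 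The main obstacle is the reduction to $A/J$: one must check that $\Psi$ is well defined, which is precisely where the hypothesis $(\fg_{0}\otimes J)V=0$ enters (it guarantees that the $((\fg_{0}\oplus\fg^{+})\otimes A)$-action on $V$ factors through $(\fg_{0}\oplus\fg^{+})\otimes(A/J)$, using also that $\fg^{+}\otimes A$ acts by $0$ on $V$), and that $\Psi$ preserves the $\Z$-grading so as to be an isomorphism in degree $0$; the rest is formal.
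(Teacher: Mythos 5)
Your proof is correct, but your forward implication takes a genuinely different route from the paper's. The paper argues directly on $K(V)$: using $[\fg_{-1},\fg_{-1}]=0$ (verified for the four Cartan families, with $\tilde S(n)$ noted separately since its $\Z$-grading is not an algebra grading) together with the decomposition (\ref{eqkv}), it shows that $(\fg\otimes J)K(V)=(\fg_{-1}\otimes J)K(V)$, which lies in $\bigoplus_{k>0}K(V)_k$ and is therefore a proper submodule of $K(V)$. You instead change base: since $(\fg_0\otimes J)V=0$ and $\fg^{+}\otimes A$ acts by zero, you form the Kac module $\bar K(V)$ over the quotient superalgebra $\fg\otimes(A/J)$ and the canonical graded surjection $\Psi\colon K(V)\twoheadrightarrow \bar K(V)$, which is an isomorphism in degree zero; hence $\ker\Psi$ is a proper submodule containing $(\fg\otimes J)K(V)$. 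Both arguments then close in the same way, via the fact that every proper submodule of $K(V)$ is contained in $N(V)$ --- a step the paper leaves implicit (``completes the proof of the other implication'') and which you justify correctly, either by rerunning the $(1+u)$-invertibility argument of Lemma \ref{gkm} on an arbitrary proper submodule, or equivalently by cyclicity of $K(V)$ plus uniqueness of the maximal submodule. As for what each approach buys: the paper's computation is self-contained and yields the sharper identity $(\fg\otimes J)K(V)=(\fg_{-1}\otimes J)K(V)$, but it depends on the structure constants of the Cartan types; your argument is functorial, avoids $[\fg_{-1},\fg_{-1}]=0$ entirely (so no case analysis, and no special treatment of $\tilde S(n)$), and would apply verbatim to any $\Z$-graded Lie superalgebra for which the generalized Kac construction makes sense.
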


\begin{proof} Note first that $(\fg\otimes J)L(V)=0$ implies that  $(\fg_0\otimes J)V=0$.

Let $J$ be an ideal of $A$ such that $(\fg_0\otimes J)V=0$.  Since $\fg \otimes J$ is an ideal of $\fg \otimes A$, it follows that
$(\fg\otimes J)K(V)$ is a submodule of $K(V)$.

\vspace{.1in}

\textbf{Claim :} $(\fg\otimes J)K(V)$ is a proper submodule of $K(V)$.

\vspace{.1in}

Since the $\Z$-grading on $\fg$ is an algebra grading when $\fg= W(n), S(n), H(n)$, it follows that   $[\fg_{-1}, \fg_{-1}]=0$. One can also directly verify that    $[\fg_{-1}, \fg_{-1}]=0$ when $\fg= \tilde{S}(n)$. Now using the fact that $[\fg_{-1}, \fg_{-1}]=0$ and (\ref{eqkv}) one has
\begin{equation}\label{e1}
(\fg_0\otimes J)K(V)= (\fg_0\otimes J)(1\otimes V)+ (\fg_0\otimes J)(U_+(\fg_{-1}\otimes A)\otimes V) \subseteq (\fg_{-1}\otimes J)K(V)
\end{equation}

Using the fact that $\fg_{-1}$ is abelian  and (\ref{eqkv}) one has
\begin{equation}\label{e2}
(\fg^+\otimes J)K(V)= (\fg^+\otimes J)U(\fg_{-1}\otimes A) \otimes V \subseteq U(\fg_{-1}\otimes A)(\fg_{-1}\otimes J)K(V) \subseteq (\fg_{-1}\otimes J)K(V).
\end{equation}

Putting together (\ref{e1}) and (\ref{e2}) one has

\begin{equation*}
(\fg \otimes J)K(V)= (\fg_{-1}\otimes J)K(V) + (\fg_0\otimes J)K(V)+ (\fg^+\otimes J)K(V)=(\fg_{-1}\otimes J)K(V).
\end{equation*}

Since $K(V)\cong U(\fg_{-1}\otimes A)\otimes V$ as a $\Z_2$-graded vector space, it follows that $(\fg_{-1}\otimes J)K(V)$ is a proper submodule of $K(V)$. This completes the proof of the claim and completes the proof of the other implication.
\end{proof}

\begin{lem} \label{lemevrep} Let $V$ be a finite dimensional irreducible $(\fg_0\otimes A)$-module and $L(V)$ be the unique irreducible quotient of $K(V)$.
\begin{itemize}
\item[(a)]$L(V)$ is a tensor product of modules of the form (\ref{evrep}) if and only if $V$ is.
\item[(b)]$L(V)$ is a tensor product of evaluation  modules if and only if $V$ is.
\end{itemize}
\end{lem}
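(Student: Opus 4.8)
The plan is to reduce both equivalences to the support criterion of Lemma \ref{fsupp}, using the fact that $L(V)$ and $V$ have the same annihilator in $A$. First I would record that the two relevant annihilator ideals coincide: by Lemma \ref{gkm}(d) we have $\Ann_A(L(V)) = \Ann_A(V)$, where the left-hand side is computed with respect to the $(\fg\otimes A)$-action and the right-hand side with respect to the $(\fg_0\otimes A)$-action. Equivalently, this is the content of Lemma \ref{lemvlv}, which asserts $(\fg_0\otimes J)V = 0 \iff (\fg\otimes J)L(V)=0$ for every ideal $J$ of $A$; applied to all such $J$ it shows the two annihilators are literally the same subset of $A$. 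In particular they have the same support and the same radical.

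Next I would apply Lemma \ref{fsupp} twice. Note that the degree-zero component $\fg_0$ is reductive (indeed $W(n)_0\cong \gl(n)$, $S(n)_0\cong \sl(n)$, and $H(n)_0\cong \mathfrak{so}(n)$), so Lemma \ref{fsupp} is available for the $(\fg_0\otimes A)$-module $V$; and $\fg$ is a simple Cartan type Lie superalgebra, so Lemma \ref{fsupp} is available for the $(\fg\otimes A)$-module $L(V)$. For part (a), Lemma \ref{fsupp}(a) gives that $L(V)$ is of the form (\ref{evrep}) iff $\Ann_A(L(V))$ has finite support, and that $V$ is of the form (\ref{evrep}) iff $\Ann_A(V)$ has finite support; since the two annihilators coincide, their supports coincide, and the equivalence follows. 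For part (b) I would argue identically but invoke Lemma \ref{fsupp}(b): $L(V)$ (resp.\ $V$) is a tensor product of evaluation modules iff its annihilator is a radical ideal of finite support, and since $\Ann_A(L(V)) = \Ann_A(V)$ as subsets of $A$, the radicality and finite-support conditions hold for $L(V)$ exactly when they hold for $V$.

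I do not expect a genuine obstacle here: once the annihilators are identified, both statements are formal consequences of Lemma \ref{fsupp}. The only points requiring care are (i) checking that Lemma \ref{fsupp} really applies to $V$, which rests on the reductivity of the \emph{degree-zero} component $\fg_0$ as opposed to the even part $\fg_{\0}$ (which is not reductive for Cartan type $\fg$); and (ii) making sure the two annihilator ideals agree as subsets of $A$, not merely as ideals with the same support. The latter is exactly what Lemma \ref{gkm}(d) (equivalently Lemma \ref{lemvlv}) provides, and it is what allows the radicality condition in part (b) to transfer verbatim.
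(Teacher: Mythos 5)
Your proposal is correct and takes essentially the same approach as the paper: identify $\Ann_A(L(V))=\Ann_A(V)$ via Lemma \ref{gkm}(d) and then apply the support criterion of Lemma \ref{fsupp} to both $V$ and $L(V)$. The paper's own proof is just a terser version of this (it proves part (a) and declares part (b) similar), while your write-up additionally makes explicit the two points the paper leaves implicit, namely that Lemma \ref{fsupp} applies to $V$ because the degree-zero part $\fg_0$ is reductive, and that the annihilators agree as ideals (not merely in support) so the radicality condition in part (b) transfers.
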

\begin{proof}We prove part (a). One can argue part (b) similarly.

By Lemma \ref{fsupp}  $L(V)$ is of the form  (\ref{evrep}) if and only if  $L(V)$ has finite support. Since $\Ann_{A}(L(V))= \Ann_{A}(V)$ by Lemma \ref{gkm} (d),  the result follows.
\end{proof}

\begin{lem}\label{fd} Let $V$ be a finite dimensional irreducible $(\fg_0\otimes A)$-module and $L(V)$ be the unique irreducible quotient of $K(V)$. Then $L(V)$ is finite dimensional if and only if $V$ is of the form (\ref{evrep}).
\end{lem}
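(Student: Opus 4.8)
The plan is to recast the form (\ref{evrep}) condition on $V$ as a statement about the annihilator $\Ann_A(V)$ and then transport finiteness back and forth between $V$ and $L(V)$. The enabling observation is that in every Cartan type case the zero-graded component $\fg_0$ is reductive: $W(n)_0\cong\gl(n)$, $S(n)_0\cong\tilde S(n)_0\cong\sl(n)$, and $H(n)_0\cong\mathfrak{so}(n)$. Hence Lemma \ref{fsupp}(a), in its reductive-Lie-algebra case, applies to the finite dimensional irreducible $(\fg_0\otimes A)$-module $V$: it is of the form (\ref{evrep}) exactly when $\Ann_A(V)$ has finite support. Thus the whole statement reduces to the equivalence that $L(V)$ is finite dimensional if and only if $\Ann_A(V)$ has finite support.

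For the forward implication, suppose $V$ is of the form (\ref{evrep}). Then $V$ factors through the quotient by $\fg_0\otimes J$ for $J=\fm_1^{n_1}\cdots\fm_r^{n_r}$, an ideal of finite codimension (here $A$ finitely generated makes each $A/\fm_i^{n_i}$, and hence $A/J$, finite dimensional), so $(\fg_0\otimes J)V=0$. By Lemma \ref{lemvlv} this yields $(\fg\otimes J)L(V)=0$, so the surjection $K(V)\twoheadrightarrow L(V)$ factors through $K(V)/(\fg\otimes J)K(V)$. Using the identity $(\fg\otimes J)K(V)=(\fg_{-1}\otimes J)K(V)$ established inside the proof of Lemma \ref{lemvlv}, together with $K(V)\cong\Lambda(\fg_{-1}\otimes A)\otimes V$ and the standard fact $\Lambda(W)/\big((\fg_{-1}\otimes J)\,\Lambda(W)\big)\cong\Lambda(W/(\fg_{-1}\otimes J))$ for $W=\fg_{-1}\otimes A$, this quotient is isomorphic to $\Lambda(\fg_{-1}\otimes(A/J))\otimes V$. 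Since $\fg_{-1}$ is purely odd and $\fg_{-1}$, $A/J$, $V$ are all finite dimensional, $\Lambda(\fg_{-1}\otimes(A/J))\otimes V$ is finite dimensional; hence $L(V)$, as a quotient of it, is finite dimensional.

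For the converse, assume $L(V)$ is finite dimensional. As $L(V)$ is a finite dimensional irreducible $(\fg\otimes A)$-module with $\fg$ of Cartan type, Lemma \ref{lemideal} provides an ideal of finite codimension annihilating it, so $\Ann_A(L(V))$ has finite codimension. By Lemma \ref{gkm}(d), $\Ann_A(V)=\Ann_A(L(V))$, so $\Ann_A(V)$ has finite codimension as well; and finite codimension forces finite support, since $A/\Ann_A(V)$ is then a finite dimensional, hence Artinian, algebra with only finitely many maximal ideals. By Lemma \ref{fsupp}(a), $V$ is of the form (\ref{evrep}), which closes the equivalence.

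I expect the only delicate point to be the finiteness step in the forward direction: one must check carefully that annihilation by $\fg\otimes J$ really collapses the infinite dimensional tensor factor $\Lambda(\fg_{-1}\otimes A)$ onto the finite dimensional $\Lambda(\fg_{-1}\otimes(A/J))$, which is precisely where both the identity $(\fg\otimes J)K(V)=(\fg_{-1}\otimes J)K(V)$ and the purely odd nature of $\fg_{-1}$ are indispensable. The converse is comparatively routine, being a direct chaining of Lemmas \ref{lemideal}, \ref{gkm}(d), and \ref{fsupp}(a).
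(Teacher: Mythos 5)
Your proof is correct and follows essentially the same route as the paper: the converse chains Lemma \ref{lemideal}, the Artinian finite-support argument, and Lemma \ref{fsupp} (you merely inline Lemma \ref{lemevrep} via the annihilator equality of Lemma \ref{gkm}(d)), while the forward direction uses Lemma \ref{lemvlv} and the purely odd, finite dimensional nature of $\fg_{-1}\otimes(A/J)$ exactly as the paper does, with your explicit exterior-algebra quotient $\Lambda(\fg_{-1}\otimes(A/J))\otimes V$ playing the role of the paper's induced module over $A/J$ and PBW. No gaps; the differences are only in presentation.
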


\begin{proof}If $L(V)$ is finite dimensional, then by Lemma \ref{lemideal} $(\fg \otimes J)L(V)=0$ for some ideal $J$ of finite codimension.  Since $A/J$ is a finite dimensional $\C$-algebra, $A/J$ is an Artin ring. An Artin ring has only a finite number of maximal ideals \cite[Proposition 8.3]{AM}.  Thus $J$ has finite support. Now  by Lemma \ref{fsupp} $L(V)$ is of the form (\ref{evrep}) and by Lemma \ref{lemevrep} so is $V$.

Suppose that $V$ is of the form (\ref{evrep}). Then  by Lemma \ref{lemideal}  there exists an ideal $J$ of finite support such that $(\fg \otimes J)V=0$. Since $A/J$ has finitely many maximal ideals and $A$ is finitely generated, $A/J$ is finite dimensional. Thus $J$ is an ideal of finite codimension.
Now by Lemma \ref{lemideal} $(\fg \otimes J)L(V)=0$.  It follows from the definition of the Generalized Kac Module that $L(V)$ is a quotient of  the induced module $U(\fg\otimes A/J) \otimes_{U((\fg_0 \oplus \fg^+)\otimes A/J)}V$. Note that this induced module is isomorphic to $U(\fg_{-1}\otimes (A/J))$ as $\Z_2$-graded vector spaces. Since $\fg_{-1}$ and $A/J$ are finite dimensional vector spaces $\fg_{-1}\otimes (A/J)$ is a finite dimensional vector space and as a $\Z_2$-graded vector space its odd, i.e., it has no even part. Now the result follows from the PBW Theorem for Lie superalgebras.
\end{proof}




\section{Classification of Finite dimensional irreducible representations}

Recall that we assume $A$ is a finitely generated algebra. In this section we state and prove main results of this paper.

\subsection{}

 Recall that $\fg$ admits a $\Z$-grading $\fg = \fg_{-1}\oplus \fg_{0} \oplus \fg^+$ and we fixed the distinguished triangular decomposition  $\fg = \fn^- \oplus \h \oplus \fn$. Recall that   $\fg_0 \cong \mathfrak{gl}(n) $  for $W(n)$, $\fg_0 \cong \mathfrak{sl}(n) $ for $S(n)$ and $\tilde{S}(n)$, and $\fg_0 \cong \mathfrak{so}(n) $ for $H(n)$.
Note that $\fg_0$ is reductive Lie algebra when $\fg=W(n)$ and is a simple Lie algebra for other three types. Let $\fg_0{'}:=[\fg_0, \fg_0]$ denote the semisimple part of $\fg_0$ and $Z(\fg_0)$ be its center.

Our first result is the classification of finite dimensional irreducible representations of $\fg \otimes A$:

\begin{thm} Let $\fg$ be a simple Cartan type Lie superalgebra. Then all irreducible finite dimensional representations of $\fg \otimes A$ are of the form (\ref{evrep}).

\begin{itemize} \label{tuntwisted}

\item[(a)] Let $\fg=W(n)$. Then irreducible finite dimensional representations of $\fg \otimes A$ are parametrized  by $(\rho, \pi)$ where $\rho$ is a finite dimensional irreducible representation of $Z(\fg_0) \otimes A$ with finite support and $\pi$ is a finite dimensional irreducible representation of $\fg_0'\otimes A$. 
\item[(b)]Let $\fg=S(n), \tilde{S}(n)$ or $H(n)$. Then irreducible finite dimensional representations of $\fg \otimes A$ are parametrized by the finite dimensional irreducible representations of $\fg_0\otimes A$.
\end{itemize}
\end{thm}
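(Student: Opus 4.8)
The plan is to transport the classification from the superalgebra $\fg\otimes A$ to the reductive Lie algebra $\fg_0\otimes A$, where Lemma \ref{lemred} is available, using the Generalized Kac modules of Section 3 as the bridge. The central step, which I expect to be the main obstacle, is to prove that \emph{every} finite dimensional irreducible $(\fg\otimes A)$-module $M$ is isomorphic to $L(V)$ for some finite dimensional irreducible $(\fg_0\otimes A)$-module $V$; once this is done the statement becomes bookkeeping with the lemmas already proved.

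To construct $V$, I would first apply Lemma \ref{lemhw} to choose a highest weight vector $m\in M$, so that in particular $(\fg^+\otimes A)m=0$. Set
$$W:=\{\,v\in M : (\fg^+\otimes A)v=0\,\}.$$
Because $[\fg_0,\fg^+]\subseteq\fg^+$, a direct computation with the bracket shows that $W$ is a $(\fg_0\otimes A)$-submodule of $M$, and it is nonzero since $m\in W$. As $W$ is finite dimensional I may choose an irreducible $(\fg_0\otimes A)$-submodule $V\subseteq W$; on $V$ the subalgebra $\fg^+\otimes A$ acts by zero, so $V$, regarded as a $((\fg_0\oplus\fg^+)\otimes A)$-submodule of $M$, carries the inflated action used to define $K(V)$. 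By the universal property of the induced module the inclusion $V\hookrightarrow M$ extends to a nonzero $(\fg\otimes A)$-module map $K(V)\to M$, which is surjective because $M$ is irreducible. Then $M$ is an irreducible quotient of $K(V)$, so it coincides with the unique irreducible quotient $L(V)$ supplied by Lemma \ref{gkm}(a),(b); hence $M\cong L(V)$. Locating $V$ inside $M$ and controlling this induced map is the delicate part; everything that follows is formal.

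Granting this, I would record the bijection $V\mapsto L(V)$ between isomorphism classes of finite dimensional irreducible $(\fg_0\otimes A)$-modules of the form (\ref{evrep}) and isomorphism classes of finite dimensional irreducible $(\fg\otimes A)$-modules: it is well defined and surjective by the reduction above together with Lemma \ref{fd} (which identifies finite dimensionality of $L(V)$ with $V$ being of the form (\ref{evrep})), and injective by Lemma \ref{gkm}(c). The opening assertion of the theorem then follows at once, since $\Ann_A(M)=\Ann_A(L(V))=\Ann_A(V)$ by Lemma \ref{gkm}(d) has finite support by Lemma \ref{fsupp}(a) applied to $\fg_0$, whence $M$ is of the form (\ref{evrep}) by Lemma \ref{fsupp}(a) applied to $\fg$.

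It remains to describe the finite dimensional irreducible $(\fg_0\otimes A)$-modules of the form (\ref{evrep}). For $\fg=W(n)$ we have $\fg_0\cong\gl(n)$ with semisimple part $\fg_0'=\sl(n)$ and one-dimensional center $Z(\fg_0)$, so by Lemma \ref{lemred} each such module is $\rho\otimes\pi$ with $\rho$ irreducible over $Z(\fg_0)\otimes A$ and $\pi$ irreducible (hence an evaluation representation) over $\fg_0'\otimes A$. Since $\Ann_A(\rho\otimes\pi)=\Ann_A(\rho)\cap\Ann_A(\pi)$ and $\Ann_A(\pi)$ already has finite support, Lemma \ref{fsupp}(a) shows $\rho\otimes\pi$ is of the form (\ref{evrep}) precisely when $\rho$ has finite support; combined with the bijection this gives (a). For $\fg=S(n),\tilde{S}(n),H(n)$ the algebra $\fg_0\cong\sl(n)$ or $\mathfrak{so}(n)$ is semisimple, so $Z(\fg_0)=0$ and Lemma \ref{lemred} makes every finite dimensional irreducible $(\fg_0\otimes A)$-module an evaluation representation, hence of the form (\ref{evrep}). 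Thus all such modules occur and the bijection yields (b).
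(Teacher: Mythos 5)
Your proposal is correct, and it follows the paper's overall strategy --- reduce the classification to that of $(\fg_0\otimes A)$-modules via Generalized Kac modules, then invoke Lemma \ref{lemred} together with the support analysis of Lemmas \ref{fsupp}, \ref{gkm}(d), \ref{lemevrep} and \ref{fd}; your treatment of parts (a) and (b) is essentially identical to the paper's. Where you genuinely diverge is in the central reduction step, the production of $V$ with $M\cong L(V)$. The paper first sets up Verma modules for $\fg\otimes A$ and the uniqueness of irreducible highest weight quotients, takes the highest weight vector $v$ of $M$, defines $V:=U(\fg_0'\otimes A)v$, and then proves directly that this $V$ is irreducible over $\fg_0'\otimes A$ by a weight-vector argument inside the triangular decomposition (the step ``$Yw=v$ with $Y\in U((\fn_0')^-\otimes A)U(\fg^+\otimes A)$ forces $Y\in U((\fn_0')^-\otimes A)$''), before applying the universal property of induction. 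You instead form the subspace $W=\{v\in M:(\fg^+\otimes A)v=0\}$, observe it is a nonzero $(\fg_0\otimes A)$-submodule (nonzero via Lemma \ref{lemhw}, stability via $[\fg_0,\fg^+]\subseteq\fg^+$), and pick \emph{any} irreducible $(\fg_0\otimes A)$-submodule $V\subseteq W$; the universal property then gives a surjection $K(V)\to M$, and uniqueness of the maximal submodule of $K(V)$ (Lemma \ref{gkm}(a)) forces $M\cong L(V)$. Your route is more elementary: it sidesteps both the Verma-module digression and the paper's somewhat delicate irreducibility verification for $U(\fg_0'\otimes A)v$, at the price of losing the explicit identification of $V$ as the $(\fg_0\otimes A)$-module generated by the highest weight vector (and hence the link between the parametrization and highest weights $\psi\in(\fh\otimes A)^{\ast}$, which the paper's construction makes visible). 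Your explicit statement of the bijection $V\mapsto L(V)$, with well-definedness and surjectivity from Lemma \ref{fd} and injectivity from Lemma \ref{gkm}(c), is also a cleaner piece of bookkeeping than the paper's implicit version of the same fact.
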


\begin{proof}

 Let $\psi \in (\fh \otimes A)^{\ast}$  and $\C_{\psi}$ be a one dimensional representation of $\fh \otimes A$ via $\psi$. Let $\fn \otimes A$ act trivially on $\C_{\psi}$. Consider the induced module (Verma module)
$$ U(\fg \otimes A)\otimes _{U((\fh\oplus \fn)\otimes A)}\C_{\psi}.$$
By standard  arguments one can show that this induced module is a highest weight module and has a unique maximal submodule. Thus has a unique irreducible quotient $L(\psi)$.
This implies that every irreducible highest weight $(\fg \otimes A)$-module is isomorphic to $L(\psi)$ for some $\psi \in (\fh \otimes A)^{\ast}$.

Let $L$ be a finite dimensional irreducible $\fg \otimes A$-module. Then by Lemma \ref{lemhw} $L$ is a highest weight module. Thus $L \cong L(\psi)$ for some $\psi \in (\fh \otimes A)^{\ast}$. Let $v$ be a highest weight of $L(\psi)$ and $\fg_0^{'}$ denote the semisimple part of $\fg_0$( Note that $\fg_0^{'}=\fg_0 $ when  $\fg = S(n), \tilde{S}(n)$ or $H(n)$). Consider the induced module
$$V:= U(\fg_0^{'} \otimes A)v.$$
Note that $V$  is a finite dimensional $(\fg_0^{'} \otimes A)$-module. Now we prove that $V$ is irreducible as $(\fg_0^{'} \otimes A)$-module:
Recall that $\fg = \fn^- \oplus \h \oplus \fn$. Let $\fg_0^{'} = (\fn_{0}^{'})^- \oplus \h \oplus \fn_0^{'}$ be the triangular decomposition induced by the one on $\fg$. Then
$$V= U((\fn_{0}^{'})^-\otimes A)v.$$
Let $w \in V$ be a weight vector. Since $L$ is irreducible as a $\fg \otimes A$-module, there exists some $Y \in  U((\fn_{0}^{'})^-\otimes A)U(\fg^+\otimes A)$ such that $Yw=v$.
Since $U(\fg^+\otimes A)v=0$, this is possible only when $ Y \in  U((\fn_{0}^{'})^-\otimes A)$.  Thus $V$ is irreducible as $(\fg_0^{'} \otimes A)$-module.

By Lemma \ref{lemideal} $(\fg \otimes J)L=0$ for some ideal $J$ of finite codimension. By Lemma \ref{lemvlv}, it follows that  $(\fg_0^{'}\otimes J)V=0$.
For $\fg=W(n)$, $\fg_0$ has one dimensional center  $Z(\fg_0)$ . The abelian Lie algebra $Z(\fg_0)\otimes A$ acts on $V$ as scalars. Furthermore, $\fg^+\otimes A$ acts trivially on $V$ as $[\fg^+\otimes A,  \fg_0^{'} \otimes A]\subseteq \fg^+\otimes A$.

By the universal property of induced modules it follows that  $L$ is the  quotient of Generalized Kac Module $K(V)$. That is,  $L \cong L(V)$.

By Lemma \ref{fd}, it follows that  $L(V)$ is of the form (\ref{evrep}).

 \begin{itemize}

 \item[(a)] By  Lemma \ref{lemevrep} $V$ is of the from (\ref{evrep}). Since $\fg_0$ is a reductive Lie algebra  Lemma  \ref{lemred} implies that $V$ is of the form $\rho \otimes \pi$ where $\rho$ is a one dimensional representation of the abelian Lie algebra $Z(\fg_0) \otimes A$ an $\pi$ is an irreducible representation of  $\fg_0' \otimes A$. Note that $\Ann_A(L(V))=\Ann_A(V) = \Ann_A(\rho) \cap \Ann_A(\pi)$ and this implies that
     the support of $V$ is the union of supports of $\rho$ and $\pi$. Since $\pi$ is a tensor product of evaluation representations the support of $\pi$ is finite. Thus support of $V$ will be finite only when support of $\rho$ is finite. Now the result follows from  Lemma \ref{fsupp}.
\item[(b)]   Since $\fg_0$  is a simple Lie algebra Lemma  \ref{lemred}  implies that $V$ is a tensor product of evaluation representations. Now the result follows from Lemma \ref{lemevrep}(d).
\end{itemize}
\end{proof}

\subsection{}

Let $T$ be a finite group acting on $\fg$ and  $\MaxSpec (A)$ by automorphisms. We assume that Lie superalgebra automorphisms respect the $\Z_2$-grading.  Then $T$ acts on the map superalgebra  $\fg \otimes A$ by linearly extending the action
$t(x\otimes a)= t x \otimes t a, t \in T, x \in \fg, a \in A.$ The associated \emph{twisted map  superalgebra} is the Lie superalgebra of the fixed points $(\fg \otimes A)^{T} \subseteq \fg \otimes A$. That is,

$$(\fg \otimes A)^{T} = \{ z \in \fg\otimes A| t z=z, \  \forall t \in T\}.$$
We will refer to a Lie superalgebra of this form as  \emph{twisted Cartan map Lie superalgebra}.

Recall that $A$ is finitely generated. In this section we assume that $T$ is finite abelian group and the action of $T $ on $\MaxSpec (A)$ is free. The isomorphism classes of irreducible representations of the abelian group $T$ can be identified with the character group of $T $.  Let $X(T)$ be the character group of $T$. $X(T)$ is an abelian group.

Since $T$ acts on algebras $\fg$ and $A$ by automorphisms, each algebra has a unique decomposition into isotypic components. Therefore $$\fg = \bigoplus_{\xi \in X(T)}\fg_{\xi}, \  A = \bigoplus_{\xi \in X(T)}A_{\xi}.$$

Since $(\fg_{\xi}\otimes A_{\xi^{'}})^{T}= \{0\}$ unless $\xi^{'}= -\xi$ we get

$$ (\fg \otimes A)^{T} =\bigoplus_{\xi \in X(T)}(\fg_{\xi}\otimes A_{-\xi})^{T}.$$



\begin{lem} \label{lres} Every finite dimensional irreducible representation of $(\fg \otimes A)^{T}$ is the restriction of a finite dimensional irreducible representation of $(\fg \otimes A)$.
\end{lem}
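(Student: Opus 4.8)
The plan is to realize any irreducible finite-dimensional representation $W$ of the fixed-point subalgebra $(\fg \otimes A)^T$ as a constituent of the restriction of an irreducible $(\fg \otimes A)$-module, by building an induced module over the larger algebra and using the freeness of the $T$-action to control its structure. The overarching strategy mirrors the Clifford-theoretic approach used in the twisted map-algebra literature (e.g.\ \cite{Sav, NSS}): since $(\fg \otimes A)^T$ is the algebra of $T$-fixed points inside $\fg \otimes A$, and $T$ is a finite group, one compares modules of a subalgebra and its ``crossed-product'' enlargement.

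\medskip

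First I would form the induced module $\widetilde{W} := U(\fg \otimes A) \otimes_{U((\fg \otimes A)^T)} W$ and show it is finite dimensional. The key input is the freeness of the $T$-action on $\MaxSpec(A)$: by Lemma~\ref{lemideal} the representation $W$ is annihilated by $(\fg \otimes A)^T \cap (\fg \otimes J)$ for some ideal $J \subseteq A$ of finite codimension, and I would first replace $J$ by a $T$-stable ideal of finite codimension (intersect the $T$-translates of $J$). Passing to the finite-dimensional quotient algebra $\fg \otimes (A/J)$, where $A/J$ is an Artinian ring on whose maximal spectrum $T$ still acts freely, reduces the problem to a finite-dimensional setting in which $U(\fg\otimes A)$ is a finitely generated module over $U((\fg\otimes A)^T)$; this gives finite-dimensionality of $\widetilde{W}$.

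\medskip

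Next I would decompose $\widetilde{W}$ into $(\fg\otimes A)$-irreducibles and show that some irreducible constituent $L$ restricts to a module containing $W$. Since $W$ is an irreducible $(\fg\otimes A)^T$-submodule of $\operatorname{Res}\,\widetilde{W}$, Frobenius reciprocity $\Hom_{(\fg\otimes A)}(\widetilde{W}, L) \cong \Hom_{(\fg\otimes A)^T}(W, \operatorname{Res} L)$ guarantees that any irreducible quotient $L$ of $\widetilde{W}$ has $W$ as a constituent of its restriction. The crux is then to show that $\operatorname{Res} L$ is actually \emph{irreducible} (so that $\operatorname{Res} L \cong W$, not merely $W \hookrightarrow \operatorname{Res} L$). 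Here the freeness of the $T$-action is essential: it forces the isotypic decomposition $\fg\otimes A = \bigoplus_\xi (\fg_\xi \otimes A_{-\xi})$ together with the support data of $L$ to be ``$T$-homogeneous'' in a way that prevents $\operatorname{Res} L$ from splitting. Concretely, one uses that the finite-support / evaluation description from Theorem~\ref{tuntwisted} lets $T$ permute the evaluation points freely, so that $L$ and its $T$-twists $L^t$ have disjoint supports unless $t$ stabilizes the support, and the freeness rules out nontrivial stabilizers.

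\medskip

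\textbf{The main obstacle} I anticipate is precisely this last irreducibility step: controlling how $T$ acts on the set of irreducible constituents of $\operatorname{Res} L$ and proving that this action is transitive with trivial stabilizer. The clean way to handle it is to invoke the evaluation-representation classification (Theorem~\ref{tuntwisted}): write $L$ as a tensor product of evaluation modules supported at finitely many points $\fm_1,\dots,\fm_s \in \MaxSpec(A)$, observe that $T$ permutes these supports freely, and use Lemma~\ref{Che} together with the factorization $\varphi_{\fm_1^{n_1},\dots,\fm_s^{n_s}}$ to see that the restriction to $(\fg\otimes A)^T$ of a single $T$-orbit of evaluation factors remains irreducible. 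If the restriction did decompose, the distinct summands would be exchanged by $T$, contradicting that $W$ already spans a $T$-invariant irreducible piece. Establishing this transitivity-with-trivial-stabilizer dichotomy rigorously — rather than the module-building bookkeeping, which is routine given Lemmas~\ref{lemideal}, \ref{fsupp}, and \ref{fd} — is where the real content of the proof lies.
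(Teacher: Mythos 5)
You should note first that the paper offers no argument of its own for this lemma: it simply cites the proof of Proposition 8.5 of \cite{Sav}. Measured against that argument, your proposal has two genuine gaps. The first is the claim that $\widetilde{W} := U(\fg \otimes A) \otimes_{U((\fg \otimes A)^T)} W$ becomes finite dimensional after passing to a $T$-stable ideal $J$ of finite codimension. By the PBW theorem, $U(\fg\otimes A)$ is a \emph{free} $U((\fg\otimes A)^T)$-module on the monomials in any homogeneous vector-space complement of $(\fg\otimes A)^T$, and this complement contains even vectors; the same remains true after passing to $\fg\otimes(A/J)$. Concretely, if the support of $J$ is a single free $T$-orbit $\{\fm, t\fm\}$ with $T=\Z/2\Z$, then $\fg\otimes(A/J)\cong \fg\oplus\fg$ with $t$ interchanging the two summands (up to its action on $\fg$), the fixed points form the twisted diagonal $\{(x,tx)\colon x\in\fg\}$, and a complement is a copy of $\fg$ whose even part generates a polynomial subalgebra of the enveloping algebra. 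So $U(\fg\otimes(A/J))$ is \emph{not} finitely generated over $U((\fg\otimes(A/J))^T)$, the induced module is infinite dimensional, and you have not produced any finite-dimensional irreducible quotient $L$ to which Frobenius reciprocity could be applied. (A smaller issue of the same kind: Lemma~\ref{lemideal} is stated for $\fg\otimes A$, not for $(\fg\otimes A)^T$, so the annihilation statement you start from requires its own, separate argument.)

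The second gap is the irreducibility of the restriction of $L$. Your argument that ``the distinct summands would be exchanged by $T$'' imports Clifford theory for a normal subgroup of finite index, but $(\fg\otimes A)^T$ is a fixed-point subalgebra, not such a subgroup: $T$ does not act on the representation space of $L$ at all (an evaluation module at points in distinct orbits carries no natural $T$-equivariant structure), so there is nothing available to permute the constituents of the restriction. The missing idea --- which is the entire content of the proof in \cite{Sav}, and which also makes the induction unnecessary --- is a surjectivity statement: because $T$ acts freely on $\MaxSpec(A)$, for maximal ideals $\fm_1,\dots,\fm_r$ lying in pairwise distinct $T$-orbits the composite $(\fg\otimes A)^T \hookrightarrow \fg\otimes A \twoheadrightarrow \bigoplus_{i=1}^r \fg\otimes A/\fm_i^{n_i}$ is \emph{surjective}; equivalently, $(\fg\otimes(A/J))^T \cong \bigoplus_{i=1}^r \fg\otimes A/\fm_i^{n_i}$, by projecting an invariant element onto one coordinate per orbit. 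Granting this, the proof is one step long: $W$ is killed by $(\fg\otimes A)^T\cap(\fg\otimes J)$ for a $T$-stable ideal $J$ of finite codimension (your intersection-of-translates construction), hence factors through the displayed surjection; pulling the same representation back along $\varphi_{\fm_1^{n_1},\dots,\fm_r^{n_r}}\colon \fg\otimes A \to \bigoplus_{i=1}^r \fg\otimes A/\fm_i^{n_i}$ gives a finite-dimensional representation of $\fg\otimes A$ on the same underlying space, which is irreducible because it is irreducible over a quotient algebra through which it factors, and whose restriction to $(\fg\otimes A)^T$ is exactly $W$. Your proposal circles around this fact whenever it invokes freeness, but never states it, and without it neither your finite-dimensionality step nor your irreducibility step goes through.
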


\begin{proof} The proof is essentially the same as for map superalgebras of basic classical Lie superalgebras (see, for example proof of Proposition 8.5 in \cite{Sav}), and will be skipped here.

\end{proof}

Let $\varphi_{\fm_1^{n_1}, \dots, \fm_r^{n_r}}(\pi_1, \dots, \pi_r)$ be as in (\ref{evrep}). We denote the restriction of $\varphi_{\fm_1^{n_1}, \dots, \fm_r^{n_r}}(\pi_1, \dots, \pi_r)$ to $(\fg \otimes A)^{T}$ by

\begin{equation}\label{resevrep}
\varphi_{\fm_1^{n_1}, \dots, \fm_r^{n_r}}^{T}(\pi_1, \dots, \pi_r).
\end{equation}

Our second result is the classification of finite dimensional irreducible representations of $(\fg \otimes A)^{T}$:

\begin{thm} Let $\fg$ be a simple Cartan type Lie superalgebra  and $A$ be a finitely generated,  commutative, associative algebra with unity. Let  $T$ be a finite abelian group acting on $\fg$ and $\MaxSpec (A)$ by automorphisms. Suppose that    the action of $T $ on $\MaxSpec (A)$ is free. Then all irreducible finite dimensional representations of $(\fg \otimes A)^{T}$ are of the form (\ref{resevrep}) such that the maximal ideals $\fm_1, \dots, \fm_r$ are distinct and all lie in distinct $T$-orbits,  and  the representations $\pi_1, \dots, \pi_r$ are irreducible.
\end{thm}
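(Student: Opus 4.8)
The plan is to combine the two results already in hand with a reduction to distinct $T$-orbits. First I would take an arbitrary finite dimensional irreducible representation $(\sigma, W)$ of $(\fg \otimes A)^{T}$. By Lemma \ref{lres}, $W$ is the restriction of a finite dimensional irreducible representation $(\pi, M)$ of $\fg \otimes A$, and by Theorem \ref{tuntwisted} this $M$ is an evaluation representation, so
$$M \cong \varphi_{\fm_1^{n_1}, \dots, \fm_s^{n_s}}(\pi_1, \dots, \pi_s)$$
for some pairwise distinct $\fm_1, \dots, \fm_s \in \MaxSpec(A)$, some $n_1, \dots, n_s \in \Z^{+}$, and some irreducible representations $\pi_i$ of $\fg \otimes A/\fm_i^{n_i}$. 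This already exhibits $W$ in the form (\ref{resevrep}); the remaining content of the theorem is that the support $\{\fm_1, \dots, \fm_s\}$ can be chosen inside pairwise distinct $T$-orbits with the factors still irreducible.

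The heart of the argument is this reduction to distinct orbits, and it is here that the freeness of the $T$-action on $\MaxSpec(A)$ enters. The key observation is that an element $z = \sum_k x_k \otimes a_k \in (\fg \otimes A)^{T}$ satisfies $tz = z$ for all $t \in T$, so its image under evaluation at a maximal ideal $\fm$ completely determines its image under evaluation at $t\cdot\fm$, the two being intertwined by the automorphism $t$ of $\fg$. Consequently, if two support points $\fm_i, \fm_j$ lie in the same orbit, the composite map $(\fg \otimes A)^{T} \to (\fg \otimes A/\fm_i^{n_i}) \oplus (\fg \otimes A/\fm_j^{n_j})$ has image only a graph inside the direct sum, so $\pi_i$ and $\pi_j$ contribute redundantly after restriction. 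I would make this precise by grouping the $\fm_i$ according to their $T$-orbit, choosing one representative $\fm$ per orbit (legitimate since by freeness each orbit has full size $|T|$ and the points are distinct), and showing that the restriction of $M$ is isomorphic to the restriction of an evaluation module supported only at the chosen representatives.

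Dually, when the representatives $\fm_1, \dots, \fm_r$ lie in distinct orbits, I would prove the Chinese Remainder type statement that
$$(\fg \otimes A)^{T} \longrightarrow \bigoplus_{i=1}^{r}\fg \otimes A/\fm_i^{n_i}$$
is surjective. Concretely, given a target element at $\fm_i$, one lifts it to $\fg \otimes A$ with the $A$-coefficients chosen (via CRT idempotents) to vanish to high order at every other orbit point, and then averages over $T$; freeness guarantees $t^{-1}\fm_i \neq \fm_i$ for $t \neq e$, so the average picks out exactly the prescribed datum at $\fm_i$ and nothing at the translated points. Combined with Lemma \ref{Che}, this surjectivity shows the restriction is a genuine tensor product of the individual restricted factors. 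Finally, since $W$ is irreducible by hypothesis and equals the restriction of the reorganized distinct-orbit module, each factor $\pi_i$ must be irreducible, for a proper submodule of any factor would, through the surjectivity above and the tensor decomposition, yield a proper $(\fg \otimes A)^{T}$-submodule of $W$. This produces the asserted form (\ref{resevrep}) with distinct-orbit representatives and irreducible factors.

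I expect the main obstacle to be the distinct-orbit reduction of the middle paragraph together with the surjectivity claim: formalizing that same-orbit evaluations are mutually determined by $T$-equivariance, so that one representative per orbit genuinely suffices, and carrying out the averaging/CRT argument cleanly. These are precisely the steps in which freeness of the action is indispensable, whereas the bookkeeping of the exponents $n_i$ and the identification of the combined factor attached to each representative is the routine but delicate remainder.
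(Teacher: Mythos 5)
Your proposal is correct and takes essentially the same route as the paper: Lemma \ref{lres} together with Theorem \ref{tuntwisted} yields the form (\ref{resevrep}), and your ``graph'' observation for same-orbit points is exactly the $T$-equivariance the paper records as $\varphi_{t \fm_i^{n_i}}^{T}(\pi_i \circ t^{-1})\cong \varphi_{\fm_i^{n_i}}^{T}(\pi_i)$, which is what permits choosing one representative per orbit. Your extra CRT/averaging surjectivity step is harmless but not needed: the restriction is automatically irreducible because it equals the module $W$ you started with, and the factors $\pi_i$ are already irreducible by the definition of the form (\ref{evrep}) in the untwisted classification.
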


\begin{proof} Note that Lemma \ref{lres} and Theorem \ref{tuntwisted} implies that  every irreducible finite dimensional representation of $(\fg \otimes A)^{T}$ is of the from (\ref{resevrep}). If $\pi_i$ is not irreducible for some $i$, then it is clear that (\ref{resevrep}) will not be irreducible. Thus the representations $\pi_i$ are irreducible. One can always assume that maximal ideals are distinct by replacing $\pi_i$ by appropriate tensor products.

$T$ acts on isomorphism classes of finite dimensional irreducible representation of $\fg$ by $t [\pi]:= [\pi \circ t^{-1}],$
where $t \in T$ and $[\pi]$ is the isomorphism class of the representation $\pi$ of $\fg$.  For any $t \in T$ one can easily see that $\pi_i \circ t^{-1}$ is a representation of $\fg \otimes (A/t \fm_i^{n_i})$.
Note that
$$\varphi_{t \fm_i^{n_i}}(\pi_i \circ t^{-1})= \pi_i \circ t^{-1} \circ \varphi_{t \fm_i^{n_i}} \cong \pi_i \circ t^{-1} \circ (t \circ \varphi_{ \fm_i^{n_i}}\circ t^{-1} ) \cong \varphi_{ \fm_i^{n_i}}(\pi_i) \circ t^{-1}.   $$

That is, we showed that $\varphi_{t \fm_i^{n_i}}(\pi_i \circ t^{-1}) \cong \varphi_{ \fm_i^{n_i}}(\pi_i) \circ t^{-1}.$ Since $t^{-1}$ fixes $(\fg \otimes A)^{T}$ we have $\varphi_{t \fm_i^{n_i}}^{T}(\pi_i \circ t^{-1})\cong \varphi_{ \fm_i^{n_i}}^{T}(\pi_i).$  Thus we can assume that the maximal ideals $\fm_i$ all lie in distinct $T$-orbits.

\end{proof}

\end{document}